\setlist[itemize]{leftmargin=*}
\setlist[enumerate]{leftmargin=*}
\theoremstyle{plain}
\newtheorem{theorem}{Theorem}
\newtheorem{lemma}[theorem]{Lemma}
\theoremstyle{definition}
\newtheorem{question}[theorem]{Question}
\newtheorem{remark}[theorem]{Remark}
\newcommand{\Q}{\mathbb{Q}}
\newcommand{\R}{\mathbb{R}}
\newcommand{\N}{\mathbb{N}}
\newcommand{\K}{\mathbb{K}}
\renewcommand{\L}{\mathbb{L}}
\newcommand{\C}{\mathbb{C}}
\newcommand{\PP}{\mathfrak{p}}
\renewcommand{\O}{\mathcal{O}}
\newcommand{\I}{\mathcal{I}}
\newcommand{\Ei}{\operatorname{Ei}}
\renewcommand{\Re}{\operatorname{Re}}
\let\oldenumerate=\enumerate
	\def\enumerate{
	\oldenumerate
	\setlength{\itemsep}{5pt}
	}
\let\olditemize=\itemize
	\def\itemize{
	\olditemize
	\setlength{\itemsep}{5pt}
	}
\begin{document}

\title[Unconditional Explicit Mertens' Theorems for Number Fields]{Unconditional Explicit {M}ertens' Theorems for Number Fields and {D}edekind Zeta Residue Bounds}

	\author[S.R.~Garcia]{Stephan Ramon Garcia}
	\address{Department of Mathematics, Pomona College, 610 N. College Ave., Claremont, CA 91711, USA} 
	\email{stephan.garcia@pomona.edu}
	\urladdr{\url{http://pages.pomona.edu/~sg064747}}
	
\author[E.S.~Lee]{Ethan Simpson Lee}
\address{School of Science, UNSW Canberra at the Australian Defence Force Academy, Northcott Drive, Campbell, ACT 2612} 
\email{ethan.s.lee@student.adfa.edu.au}
\urladdr{\url{https://www.unsw.adfa.edu.au/our-people/mr-ethan-lee}}
	
\thanks{SRG supported by NSF Grant DMS-1800123.}

\subjclass[2010]{11N32, 11N05, 11N13}

\keywords{Number field, discriminant, Mertens' theorems, prime ideal, ideal counting function, Dedekind zeta function, residue, regulator, quadratic field}

\begin{abstract}
We obtain unconditional, effective number-field analogues of the three Mertens' theorems, all with explicit constants and valid for $x\geq 2$.  Our error terms are explicitly bounded in terms of the degree and discriminant of the number field.  To this end, we provide unconditional bounds, with explicit constants, for the residue of the corresponding Dedekind zeta function at $s=1$.
\end{abstract}

\maketitle
\section{Introduction}
In 1874, twenty-two years before the proof of the prime number theorem \cite{hadamard1896distribution, de1896fonction}, 
Mertens \cite{Mertens} proved the following three results
\begin{align*}
\sum_{p\leq x} \frac{\log p}{p} &= \log x + O(1), \\
\sum_{p \leq x} \frac{1}{p} &= \log \log x + M + O\bigg(\frac{1}{\log x} \bigg), \\
\prod_{p\leq x} \bigg(1 -\frac{1}{p} \bigg) &= \frac{e^{-\gamma}}{\log x} \big(1+o(1) \big),
\end{align*}
collectively referred to as Mertens' theorems.
Here $p$ denotes a rational prime number, $M = 0.2614\ldots$ is the Meissel--Mertens constant, and $\gamma = 0.5772\ldots$ is the Euler--Mascheroni constant. 
Proofs can be found in Ingham \cite[Thm.~7]{Ingham} and Montgomery--Vaughan \cite[Thm.~2.7]{MontgomeryVaughan}. Rosser--Schoenfeld \cite[(3.17) - (3.30)]{Rosser} provide unconditional error terms with explicit constants.

Rosen \cite[Lem.~2.3, Lem.~2.4, Thm.~2]{Rosen} generalized Mertens' theorems to the number-field setting, but without explicit constants in the error terms (see also Lebacque's paper \cite{Lebacque}). Assuming the Generalized Riemann Hypothesis, 
the authors obtained effective number-field analogues of Mertens' theorems, 
in which the implied constants are explicit in their dependence upon the degree and discriminant of the number field \cite{EMT4NF1}.
We now approach the same family of problems unconditionally; that is, without assuming any unproved conjectures.

\subsection*{Definitions}
Let $\K$ denote a number field of degree $n_{\K} = [\K : \Q]$,
with ring of algebraic integers $\O_{\K}$.
Let $\Delta_{\K}$ denote the discriminant of $\K$
and let $N(\mathfrak{a})$ denote the norm of an ideal $\mathfrak{a}\subset\O_{\K}$;
we let $\PP$ denote a prime ideal of $\O_{\K}$.
Note that $|\Delta_{\K}| \geq 3$ for $n_{\K} \geq 2$.
The Dedekind zeta function 
\begin{equation*}
\zeta_{\K}(s) = \sum_{\mathfrak{a} \subseteq \O_{\K}} \frac{1}{N(\mathfrak{a})^s}
= \prod_{\PP} \bigg(1 - \frac{1}{N(\PP)^s}\bigg)^{-1}
\end{equation*}
is analytic on $\Re{s} > 1$ and extends meromorphically to $\C$, except for a simple pole at $s=1$. 
By the analytic class number formula, the residue of $\zeta_{\K}(s)$ at $s=1$ is 
\begin{equation}\label{eqn:residue_class_ana_form}
\kappa_{\K} = \frac{2^{r_1}(2\pi)^{r_2}h_{\K}R_{\K}}{w_{\K}\sqrt{|\Delta_{\K}|}},
\end{equation}
in which $r_1$ is the number of real places of $\K$, $r_2$ is the number of complex places of $\K$, $w_{\K}$ is the number of roots of unity in $\K$, $h_{\K}$ is the class number of $\K$, and $R_{\K}$ is the regulator of $\K$ \cite{Lang}.

The nontrivial zeros of $\zeta_{\K}$ lie in the critical strip, $0 < \Re s < 1$, where there might exist an 
exceptional zero $\beta$, which is real and cannot lie too close to $\Re{s} =1$ \cite[p.~148]{Stark}. There are some cases in which 
$\beta$ is known to not exist. For example, Heilbronn \cite{Heilbronn} (later generalized by Stark \cite{StarkBS}) showed 
that if $\L$ is a normal extension of $\K$ and $\L$ has no quadratic subfield, then $\beta$ does not exist.
The Generalized Riemann Hypothesis (GRH) asserts that the nontrivial zeros of $\zeta_{\K}(s)$ satisfy 
$\Re s = \frac{1}{2}$ and that the exceptional zero $\beta$ does not exist.

\subsection*{Statement of results}
Our main result (Theorem \ref{Theorem:Main} below) is an effective version of Mertens' theorems for number fields.
It is unconditional and the error terms depend explicitly only upon the two
easily-obtained parameters $n_{\K}$ and $\Delta_{\K}$;
see Remark \ref{Remark:Kappa}.
Moreover, our estimates are valid for all $x \geq 2$.
Our overall roadmap follows Diamond--Halberstam \cite[p.~128-9]{DiamondHalberstam}, 
although significant adaptations and technical lemmas are required to make things explicit.\footnote{Another possible approach might be to adapt Hardy's method \cite{HardyNote1927,HardyNote2935,BardestaniFreiberg}.}

\begin{theorem}\label{Theorem:Main}
Let $\K$ be a number field with $n_{\K} \geq 2$.  Then for $x \geq 2$,
\begin{align}
    \sum_{N(\PP)\leq x}\frac{\log{N(\PP)}}{N(\PP)} &\,=\, \log{x} + A_{\K}(x) ,\label{eq:A1}\tag{A1}\\[5pt]
    \sum_{N(\PP)\leq x}\frac{1}{N(\PP)} &\,=\, \log\log{x} + M_{\K} + B_{\K}(x),\label{eq:B1} \tag{B1}\\[5pt]
    \prod_{N(\PP)\leq x}\left(1 - \frac{1}{N(\PP)}\right) &\,=\, \frac{e^{-\gamma}}{\kappa_{\K}\log{x}} \big(1 + C_{\K}(x) \big),\label{eq:C1}\tag{C1}
\end{align}
in which
\begin{align}
M_{\K} \,&=\, \gamma + \log{\kappa_{\K}} + \sum_{\PP}\left[\frac{1}{N(\PP)} + \log\left(1 - \frac{1}{N(\PP)}\right)\right]\label{eq:M1}\tag{M1},\\[5pt]
|A_{\K}(x)| 
&\leq  \Upsilon_{\K} ,
\label{eq:A2}\tag{A2}\\[5pt]
|B_{\K}(x)| &\leq \frac{2 \Upsilon_{\K}}{\log x}, \label{eq:B2}\tag{B2}\\[5pt]
|C_{\K}(x)| &\,\leq\, |E_{\K}(x)|e^{| E_{\K}(x)|} \qquad \text{with}\qquad |E_{\K}(x)| \,\leq\, \frac{n_{\K}}{x-1} + |B_{\K}(x)|,\label{eq:C2}\tag{C2} 
\end{align}
and
\begin{align}
\Lambda_{\K} 
&= e^{28.2n_{\K}+5} (n_{\K}+1)^{ \frac{5(n_{\K}+1)}{2}} |\Delta_{\K}|^{\frac{1}{n_{\K}+1}} 
(\log |\Delta_{\K}|)^{n_{\K}}, \label{eq:Lambda} \tag{$\Lambda$}\\[5pt]
\Upsilon_{\K}&=  \left(  \frac{(n_{\K}+1)^2}{2\kappa_{\K}(n_{\K}-1)}\Lambda_{\K}   + 1\right)
+  \frac{0.55\, \Lambda_{\K}n_{\K}(n_{\K}+1)}{\kappa_{\K}} + n_{\K} + 
 40.31 \dfrac{\Lambda_{\K}n_{\K}}{\kappa_{\K}} .\label{eq:Upsilon}\tag{$\Upsilon$}
\end{align}
In particular, $E_{\K}(x) = o(1)$, hence $C_{\K}(x) = o(1)$ as $x \to \infty$.  Furthermore,
\begin{equation}\label{eq:M2}\tag{M2}
    \gamma + \log \kappa_{\K} -n_{\K} \,\leq\, M_{\K} \,\leq\,  \gamma + \log \kappa_{\K}.
\end{equation}
All quantities above can be effectively bounded, with explicit constants, in terms of $|\Delta_{\K}|$ and $n_{\K}$ alone;
see the remarks below.
\end{theorem}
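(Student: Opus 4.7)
The plan is to follow the Diamond--Halberstam roadmap referenced in the excerpt while pushing every estimate through with constants explicit in $n_{\K}$, $|\Delta_{\K}|$, and $\kappa_{\K}$. The foundation is an unconditional Chebyshev-type bound for the weighted prime-counting function $\theta_{\K}(x) = \sum_{N(\PP) \leq x} \log N(\PP)$. This bound should be extracted from an explicit error estimate for the ideal counting function $F_{\K}(x) = \#\{\mathfrak{a} \subseteq \O_{\K} : N(\mathfrak{a}) \leq x\}$, combined with the explicit upper bound on the residue $\kappa_{\K}$ promised in the abstract. The elaborate quantity $\Lambda_{\K}$ defined in \eqref{eq:Lambda} should emerge as the resulting explicit Chebyshev constant, and the repeated appearance of $\Lambda_{\K}/\kappa_{\K}$ inside $\Upsilon_{\K}$ then traces back to this single inequality.

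Given such a bound, \eqref{eq:A1} and \eqref{eq:A2} follow by Abel summation:
\[
\sum_{N(\PP)\leq x}\frac{\log N(\PP)}{N(\PP)} = \frac{\theta_{\K}(x)}{x} + \int_{2^-}^x \frac{\theta_{\K}(t)}{t^2}\,dt,
\]
after which substituting $\theta_{\K}(t) = t + (\theta_{\K}(t)-t)$ yields the main term $\log x$ and an error absorbed into $\Upsilon_{\K}$; the corrections needed to pass between sums over prime ideals and sums over prime powers account for the explicit $n_{\K}$-dependent summands in \eqref{eq:Upsilon}. Then \eqref{eq:B1} and \eqref{eq:B2} follow from \eqref{eq:A1} by a second Abel summation with weight $1/\log t$: writing $S_{\K}(t) = \log t + A_{\K}(t)$ for the left side of \eqref{eq:A1}, we have
\[
\sum_{N(\PP)\leq x}\frac{1}{N(\PP)} = \frac{S_{\K}(x)}{\log x} + \int_{2^-}^x \frac{S_{\K}(t)}{t(\log t)^2}\,dt,
\]
whose main contribution is $\log\log x$ plus a convergent constant; this constant must equal the Meissel--Mertens constant $M_{\K}$ from \eqref{eq:M1} by comparison with the $x\to\infty$ limit, and the error inherits an extra $1/\log x$ factor, giving the stated bound $2\Upsilon_{\K}/\log x$.

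For \eqref{eq:C1} and \eqref{eq:C2}, take logarithms of the product and split
\[
\log\!\Bigl(1-\tfrac{1}{N(\PP)}\Bigr) = -\tfrac{1}{N(\PP)} + \Bigl[\log\!\Bigl(1-\tfrac{1}{N(\PP)}\Bigr) + \tfrac{1}{N(\PP)}\Bigr].
\]
Combining \eqref{eq:B1} for the first sum with \eqref{eq:M1} for the second (extended to all $\PP$) gives
\[
\log\!\prod_{N(\PP)\leq x}\!\Bigl(1-\tfrac{1}{N(\PP)}\Bigr) = -\log\log x - \log\kappa_{\K} - \gamma + E_{\K}(x),
\]
where $E_{\K}(x)$ collects $-B_{\K}(x)$ together with the tail $\sum_{N(\PP) > x}[1/N(\PP) + \log(1-1/N(\PP))]$. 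Using the identity $u + \log(1-u) = -\sum_{k\geq 2} u^k/k$ and the bound $\#\{\PP : N(\PP) = m\} \leq n_{\K}$ yields the tail estimate $n_{\K}/(x-1)$, which together with $|B_{\K}(x)|$ dominates $|E_{\K}(x)|$. Exponentiating and invoking $|e^u - 1| \leq |u|e^{|u|}$ produces \eqref{eq:C1}. Finally, \eqref{eq:M2} is immediate from the same identity applied to \eqref{eq:M1}: the summands $1/N(\PP) + \log(1 - 1/N(\PP))$ are all nonpositive, giving the upper bound, while the crude estimate $\sum_\PP\sum_{k\geq 2} 1/(k N(\PP)^k) \leq n_{\K}\sum_p\sum_{k\geq 2} 1/(kp^k) \leq n_{\K}$ supplies the lower bound.

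The main obstacle is the first step: producing the unconditional Chebyshev-type bound for $\theta_{\K}$ with its explicit dependence on $n_{\K}$ and $|\Delta_{\K}|$. This is the non-trivial analytic content and requires the Dedekind zeta residue bound advertised in the abstract, together with a careful adaptation of the rational-case Chebyshev argument to the number-field setting. Everything downstream is Abel-summation bookkeeping and standard Mertens-style manipulations.
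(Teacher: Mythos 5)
Your treatment of (B1)--(B2), (C1)--(C2), and (M2) matches the paper's in all essentials (Abel summation against $1/\log t$, exponentiation with $|e^u-1|\le|u|e^{|u|}$, and the tail bound $n_{\K}/(x-1)$ obtained from $\#\{\PP : N(\PP)=m\}\le n_{\K}$). But your first step --- the foundation everything else rests on --- does not work as described. You propose to prove (A1) by Abel summation from a ``Chebyshev-type bound'' for $\theta_{\K}(x)=\sum_{N(\PP)\le x}\log N(\PP)$, writing $\theta_{\K}(t)=t+(\theta_{\K}(t)-t)$ and absorbing the error into $\Upsilon_{\K}$. For the integral $\int_2^x(\theta_{\K}(t)-t)t^{-2}\,dt$ to be $O(1)$ you need $\theta_{\K}(t)-t$ to be $o(t/\log t)$ in an integrable way; a Chebyshev bound $\theta_{\K}(x)\le Cx$ only yields $\sum_{N(\PP)\le x}\log N(\PP)/N(\PP)=O(\log x)$, with the wrong leading constant and no isolated main term $\log x$. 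What you actually need is a prime ideal theorem with a quantified error, and unconditional explicit versions of that are governed by a possible exceptional zero of $\zeta_{\K}$ --- precisely the dependence this theorem is designed to avoid. Moreover, such an estimate for $\theta_{\K}$ cannot be ``extracted'' from an error term for the ideal-counting function $\I_{\K}(x)=\kappa_{\K}x+O^{\star}(\Lambda_{\K}x^{1-2/(n_{\K}+1)})$ by elementary manipulation; passing from counting ideals to counting prime ideals with a power-saving error is equivalent in strength to a zero-free region.

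The paper's route circumvents this entirely, exactly as in Mertens' original (pre-PNT) argument. One forms $T_{\K}(x)=\sum_{n\le x}I_{\K}(n)\log n$ and evaluates it twice: once by partial summation against Sunley's estimate for $\I_{\K}$, giving $T_{\K}(x)=\kappa_{\K}x\log x+O^{\star}(\cdot\,x)$, and once via the Legendre--Chebyshev (Poincar\'e) identity $\prod_{N(\mathfrak{a})\le x}N(\mathfrak{a})=\prod_{N(\PP)\le x}\prod_{j\ge1}N(\PP)^{\I_{\K}(x/N(\PP^j))}$, giving $T_{\K}(x)=\kappa_{\K}x\sum_{N(\PP)\le x}\log N(\PP)/N(\PP)+O^{\star}(\cdot\,x)$. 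Equating and dividing by $\kappa_{\K}x$ produces the main term $\log x$ with a bounded, explicit error; this is where the factors $\Lambda_{\K}/\kappa_{\K}$ in $\Upsilon_{\K}$ originate, and it explains why only an ideal-count estimate (never a prime-ideal count) is required. Separately, your identification of the constant in (B1) with the formula (M1) is asserted rather than proved; the paper needs a genuine Dirichlet-series argument (Ingham-style, comparing $\sum_{\PP}N(\PP)^{-s}$ with $\log((s-1)\zeta_{\K}(s))$ as $s\to1^+$) to connect $M_{\K}$ to $\gamma+\log\kappa_{\K}$, and that step is what makes (C1) carry the factor $e^{-\gamma}/\kappa_{\K}$.
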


In order to prove Theorem \ref{Theorem:Main} with error bounds not dependent upon a 
potential exceptional zero of $\zeta_{\K}$, our proof eschews estimates of the prime-ideal counting function, such as \cite[Cor.~1]{Grenie2019}, in favor of an alternative. 
We appeal instead to a result of Sunley (see Theorem \ref{Theorem:Sunley})
for an explicit estimate for the ideal-counting function for $\K$ that does not require information about the zeros of $\zeta_{\K}$.

\begin{remark}
For $n_{\K} = 1$, that is $\K = \Q$, our approach provides weaker error bounds than
Rosser--Schoenfeld \cite[Thms.~5-7]{Rosser}.
Much more is known about the Riemann zeta function than
a generic Dedekind zeta function, so this is not surprising.
\end{remark}

\begin{remark}\label{rem:MainImproved}
For $n_{\K} = 2$ and $n_{\K}=3$, one can obtain slightly improved bounds
by implementing \eqref{eq:BetterBit} in the proof of \eqref{eq:B2} throughout
the subsequent computations.
\end{remark}

\begin{remark}\label{Remark:Kappa}
An elegant upper bound for the residue $\kappa_{\K}$ is due to Louboutin \cite{Louboutin00}:
\begin{equation}\label{eq:LouboutinKappa}\qquad
    \kappa_{\K} \leq \left(\frac{e\log{|\Delta_{\K}|}}{2(n_{\K} - 1)}\right)^{n_{\K} - 1}
    \quad \text{for $n_{\K} \geq 2$}.
\end{equation}
In Section \ref{sec:ResidueBounds}, we give several unconditional lower bounds on $\kappa_{\K}$. First, there is 
\begin{equation*}
\kappa_{\K} \geq \frac{0.36232}{\sqrt{|\Delta_{\K}|}}.
\end{equation*}
For $n_{\K} \geq 3$, an analysis of Stark's paper \cite{StarkBS} yields
the asymptotically better bound
\begin{equation*}
    \kappa_{\K} > \frac{0.0014480}{n_{\K} g(n_{\K}){|\Delta_{\K}|}^{1/n_{\K}}},
\end{equation*}
in which $g(n_{\K})=1$ if $\K$ has a normal tower over $\Q$ and $g(n_{\K}) = n_{\K}!$ otherwise.
However, there are concerns about a constant employed in his proof; see Remark \ref{Remark:Pintz}.
Section \ref{sec:ResidueBounds} contains improvements in special cases
and additional digits of accuracy.
\end{remark}

\subsection*{Outline of the paper}
Section \ref{Section:Proof} contains the proof of Theorem \ref{Theorem:Main},
which occupies the bulk of the paper.
In Section \ref{sec:ResidueBounds}, we obtain the unconditional, explicit lower bounds for $\kappa_{\K}$ 
discussed in Remark \ref{Remark:Kappa}.
We conclude in Section \ref{Section:Future} with remarks and future questions.

\subsection*{Acknowledgements}
We thank Matteo Bordignon, Korneel Debaene, Tristan Freiberg, Eduardo Friedman, and Tim Trudgian for their feedback and suggestions.
Thanks also to Joshua Suh and Jiahui Yu for double checking our computations. 
Finally, special thanks to the anonymous referee for many detailed comments.

\section{Proof of Theorem \ref{Theorem:Main}}\label{Section:Proof}

We split the proof of Theorem \ref{Theorem:Main} across several subsections.  
In what follows, $f(x) = O^{\star}(g(x))$ means $|f(x)| \leq |g(x)|$ for all $x$ in a pre-defined range (often
$x \geq 2$).
This is similar to Landau's big-$O$ notation, except the implied constant is always $1$.
To begin, we require some preliminary remarks.

\subsection{Preliminaries}

Fix a number field $\K$ with $n_{\K}\geq 2$ and ring of integers $\O_{\K}$. 
Let $I_{\K}(n)$ denote the number of ideals with norm $n$ and let $P_{\K}(n)$ denote the number of prime ideals in $\O_{\K}$ with norm $n$.
Borevich--Shafarevich \cite[p. 220]{Borevich} tells us that if $p^k$ is a rational prime power, then $I_{\K}(p^k)\leq (k+1)^{n_{\K}}$.
The total multiplicativity of the norm means that a non-prime ideal may have norm $p^k$, so one might suspect that a tighter bound
can be found for $P_{\K}(p^k)$. This expectation is well founded.

If $\PP\subset\mathcal{O}_{\K}$ is a prime ideal, then it divides exactly one rational prime $p$ and $N(\mathfrak{p}) = p^k$ for some $1\leq k\leq n_{\K}$ \cite[Thm.~5.14c]{StewartTall}.
Moreover, $p\O_{\K}$ has a unique factorization
\begin{equation*}
p\O_{\K} = \PP_1^{e_1} \cdots \PP_r^{e_r}
\end{equation*}
into prime ideals $\PP_i$, where $e_i \in \N$ is the ramification index of $\PP_i$.
The $\PP_i$ are the only prime ideals in $\K$ with norm equal to a power of $p$.
In fact, $N(\PP_i) = p^{f_i}$, in which the inertia degrees $f_i$ satisfy $f_i\leq n_{\K}$ and
\begin{equation*}
e_1 f_1 + \cdots + e_r f_r = n_{\K}.
\end{equation*}
In particular, for each rational prime $p$ the corresponding inertia degrees satisfy
\begin{equation}\label{eq:InertiaSum}
    {\sum_{f_i}} f_i \leq n_{\K}\quad\text{hence}\quad P_{\K}(p^k)\leq \bigg\lfloor \frac{n_{\K}}{k} \bigg\rfloor \leq \frac{n_{\K}}{k}.
\end{equation}

We require the following technique to obtain estimates for sums over prime ideals.
Suppose $g$ is a nonnegative arithmetic function and recall that a prime ideal $\PP$ with $N(\PP) \leq x$ lies over exactly one rational prime $p \leq x$.  Then
\begin{equation*}
    G(x)
    = \sum_{N(\PP)\leq x} g(N(\PP))
    \leq \sum_{p\leq x}{\sum_{f_i}} g(p^{f_i}),
\end{equation*}
in which $\sum_{f_i}$ denotes the sum over the inertia degrees $f_i$ of the prime ideals lying over $p$. 
If one can apply \eqref{eq:InertiaSum}, the previous sum can be simplified.  For example,
\begin{equation*}
    \theta_{\K}(x)
    = \sum_{N(\PP)\leq x} \log{N(\PP)} 
    \leq \sum_{p\leq x}\sum_{f_i} \log{p^{f_i}}
    = \sum_{p\leq x}\bigg(\sum_{f_i} f_i\bigg) \log{p}
    \leq n_{\K}\theta_{\Q}(x),
\end{equation*}
in which $\theta_{\Q}$ denotes the Chebyshev theta function and $\theta_{\K}$ its number-field analogue.

Finally, to avoid the problems which might arise from an exceptional zero of $\zeta_{\K}$, we introduce the summatory function 
\begin{equation*}
\I_{\K}(x) = \sum_{n\leq x} I_{\K}(n).
\end{equation*}
This is the number-field analogue of the integer-counting function $\lfloor x \rfloor$. 
Our proof relies on the following unconditional result of Sunley.

\begin{theorem}[Sunley]\label{Theorem:Sunley}
Let $\K$ be a number field with $n_{\K} \geq 2$.  For $x > 0$,
\begin{equation}\label{eq:Sunley}
\I(x) = \kappa_{\K} x + O^{\star} ( \Lambda_{\K} x^{1 - \frac{2}{n_{\K}+1}}),
\end{equation}
in which
\begin{equation*}
\Lambda_{\K} = e^{28.2n_{\K}+5} (n_{\K}+1)^{ \frac{5(n_{\K}+1)}{2}} |\Delta_{\K}|^{\frac{1}{n_{\K}+1}} 
(\log |\Delta_{\K}|)^{n_{\K}}.
\end{equation*}
\end{theorem}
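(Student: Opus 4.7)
The plan is to follow Landau's classical route to counting integral ideals, but to carry out every estimate with fully explicit constants in $n_{\K}$ and $|\Delta_{\K}|$. The argument splits naturally into three stages: reduction to a lattice-point count in a fundamental domain, application of a Davenport-style lattice-point estimate, and explicit control of all implicit constants.

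First, decompose by ideal class: $\I_{\K}(x) = \sum_{C} N(C, x)$, where $C$ ranges over the $h_{\K}$ ideal classes and $N(C,x)$ counts integral ideals in $C$ with norm at most $x$. For each $C$, choose an integral representative $\mathfrak{b}$ of $C^{-1}$; then $\mathfrak{a}\mapsto \mathfrak{a}\mathfrak{b}$ gives a bijection between integral ideals in $C$ with $N(\mathfrak{a})\le x$ and principal ideals $(\alpha)\subset\mathfrak{b}$ with $|N_{\K/\Q}(\alpha)|\le xN(\mathfrak{b})$. Via the Minkowski embedding $\K\hookrightarrow \R^{r_1}\times\C^{r_2}$ and Dirichlet's unit theorem, select a fundamental domain $F$ for the action of $\O_{\K}^{\times}$ on the nonzero locus; then $N(C,x)$ equals the number of $\mathfrak{b}$-lattice points in $F\cap\{|N(\cdot)|\le xN(\mathfrak{b})\}$. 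Rescaling by $(xN(\mathfrak{b}))^{1/n_{\K}}$ reduces the problem to counting lattice points of a scaled lattice inside a fixed region of bounded shape, with dilation parameter $T=(xN(\mathfrak{b}))^{1/n_{\K}}$.

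Second, invoke a Davenport-type lattice-point estimate: for a bounded region $D\subset \R^{n_{\K}}$ whose boundary admits a Lipschitz parametrization by lower-dimensional cubes, and a full-rank lattice $\Lambda$,
\begin{equation*}
\#(TD\cap\Lambda)=\frac{\operatorname{vol}(D)\,T^{n_{\K}}}{\operatorname{covol}(\Lambda)}+O^{\star}\!\Bigl(\textstyle\sum_{j=0}^{n_{\K}-1} L^{j}T^{j}\Bigr),
\end{equation*}
where $L$ controls the Lipschitz constants of the boundary charts. The volume term, summed over all $h_{\K}$ classes, produces the main term $\kappa_{\K}x$ through the analytic class number formula, since the product $2^{r_1}(2\pi)^{r_2}h_{\K}R_{\K}/(w_{\K}\sqrt{|\Delta_{\K}|})$ is exactly $\kappa_{\K}$. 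The naive boundary term of leading order $T^{n_{\K}-1}$ yields only an error of order $x^{1-1/n_{\K}}$; the sharpening to $x^{1-2/(n_{\K}+1)}$ requires Landau's smoothing argument, in which one convolves the indicator of the region with a bump of width $\delta$, estimates the resulting smooth sum against the volume, then balances the smoothing error $\delta\cdot T^{n_{\K}-1}$ against the boundary-layer term $T^{n_{\K}}/\delta^{\text{something}}$, optimizing $\delta$ as a power of $T$.

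Third, assemble the explicit constants. Minkowski's bound provides $N(\mathfrak{b})\le (n_{\K}!/n_{\K}^{n_{\K}})\sqrt{|\Delta_{\K}|}$ for the class representatives, controlled by Stirling; the covolume of $\mathfrak{b}$ is $N(\mathfrak{b})2^{-r_2}\sqrt{|\Delta_{\K}|}$; and $h_{\K}$ is absorbed via the class number formula. The chief difficulty, and the step that dictates the shape of $\Lambda_{\K}$, is bounding the Lipschitz complexity $L$ of $F$: the fundamental domain's geometry depends on explicit fundamental units whose norms can only be controlled by the regulator and by elementary bounds in terms of $|\Delta_{\K}|$ and $n_{\K}$. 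This is where the exponential factor $e^{28.2 n_{\K}+5}$, the dimension-dependent $(n_{\K}+1)^{5(n_{\K}+1)/2}$ from the Lipschitz bookkeeping and the smoothing optimization, and the $(\log|\Delta_{\K}|)^{n_{\K}}$ arising from explicit unit bounds all accumulate. Sunley's contribution is precisely to carry out this bookkeeping with hard numerical constants, rather than the qualitative $O_{\K}(1)$ of Landau.
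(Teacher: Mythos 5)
Your proposal sets out to reprove Sunley's theorem from first principles, whereas the paper deliberately does not: it cites Sunley's thesis and only verifies, by inspecting her argument, that the estimate holds for all $x>0$ (the range $0<x\le 2n_{\K}\sqrt{|\Delta_{\K}|}$ is disposed of via the trivial inequality $|\I_{\K}(x)-\kappa_{\K}x|\le |\I_{\K}(x)|+\kappa_{\K}x$ together with Sunley's explicit bounds on $|\I_{\K}(x)|$ and on $\kappa_{\K}$, and the remaining range by her contour-integration argument). That difference of intent aside, your sketch has a genuine gap at the decisive step. The ideal-class decomposition, the fundamental domain for the unit action, and a Davenport/Lipschitz boundary estimate yield an error term of order $T^{n_{\K}-1}=x^{1-1/n_{\K}}$, and $x^{1-1/n_{\K}}$ is strictly \emph{larger} than $x^{1-2/(n_{\K}+1)}$ for every $n_{\K}\ge 2$. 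The passage to the exponent $1-\tfrac{2}{n_{\K}+1}$ is Landau's theorem, and it is not obtained by convolving the indicator of the region with a bump and optimizing its width: no purely geometric smoothing beats the $(n_{\K}-1)$-dimensional boundary contribution for this region (which even has flat faces coming from the walls of the fundamental domain). The actual mechanism in Landau and in Sunley is complex-analytic --- a Perron/Riesz-mean formula for $\sum_{n\le x}I_{\K}(n)$, the functional equation of $\zeta_{\K}$, convexity estimates in the critical strip, and moving the line of integration, with the exponent $1-\tfrac{2}{n_{\K}+1}$ emerging from balancing the two sides of the contour. Your second stage therefore cannot produce the stated error term as written.

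Two further points. The theorem is asserted for all $x>0$, and when $x$ is small relative to $|\Delta_{\K}|$ the lattice-point asymptotic is vacuous; one must fall back on crude bounds for $\I_{\K}(x)$ and $\kappa_{\K}$, which is precisely the case the paper checks in Sunley's thesis. And asserting that the Lipschitz bookkeeping ``accumulates'' to $e^{28.2n_{\K}+5}(n_{\K}+1)^{5(n_{\K}+1)/2}|\Delta_{\K}|^{1/(n_{\K}+1)}(\log|\Delta_{\K}|)^{n_{\K}}$ is not a derivation: the explicit value of $\Lambda_{\K}$ is the entire content of the theorem, so it cannot be left as bookkeeping to be done. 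If you need this result, the efficient course is the paper's --- cite Sunley and verify the range of admissible $x$.
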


This result is \cite[Thm.~2]{SunleyBAMS}, although the range of
admissible $x$ is not specified and a proof is not given (this is common for short
research announcements in the \emph{Bulletin of the AMS} like this).
Sunley's result also appears as \cite[Thm.~1.1]{SunleyTAMS}, again without proof or an explicit range of
admissible $x$.  Consequently, we were forced to go back to Sunley's doctoral
thesis, in which the result is originally proved \cite{SunleyThesis}.

The desired result is stated, with no mention of the range of admissible $x$, 
as the first part of \cite[Thm.~1, p.~17]{SunleyThesis} and restated as \cite[Thm.~3.3.5, p.~54]{SunleyThesis}.
For convenience, and to verify that Sunley intended \eqref{eq:Sunley} to apply for $x>0$, we  examine the proof presented her thesis \cite{SunleyThesis}.
The proof begins at the bottom of p.~54 with the consideration of (in our notation) the first case
$0 < x \leq 2 n_{\K} \sqrt{|\Delta_{\K}|}$; this clearly indicates that Sunley intended \eqref{eq:Sunley}
to be taken for $x>0$.  Let
\begin{align*}
    a_1 = e^{28.2n_{\K}+5} (n_{\K}+1)^{ \frac{5(n_{\K}+1)}{2}},\quad
    a_3 = 2^{2n_{\K}} e^{\frac{1}{2}} \pi^{n_{\K}} (1.3)^{n_{\K}+1}, \quad
    a_7 = 2^{4n_{\K}+2} 5^{n_{\K}} {n_{\K}}!
\end{align*}
These constants are defined on \cite[p.~54, 20, 28]{SunleyThesis}, respectively.

In \cite[Lem.~3.1.1]{SunleyThesis}, Sunley notes that $\kappa_{\K} \leq a_3 (\log{|\Delta_{\K}|})^{n_{\K}}$.
In \cite[Thm.~3.1.6]{SunleyThesis}, Sunley proves that
\begin{equation*}
    | \I_{\K}(x) | \leq
    \begin{cases}
    (\log{|\Delta_{\K}|})^{n_{\K} - 1}x & \text{for $0 \leq x < 2$} ,\\[5pt]
    n_{\K} \binom{n_{\K}-1}{\left\lfloor (n_{\K}-1)/2 \right\rfloor}(\log{|\Delta_{\K}|})^{n_{\K} - 1} x& \text{for $2 \leq x \leq |\Delta_{\K}|$} ,\\[5pt]
    a_7 (\log{|\Delta_{\K}|})^{n_{\K} - 1} x& \text{for $x > |\Delta_{\K}|$} ,
    \end{cases}
\end{equation*}
in which $\binom{n_{\K}-1}{\left\lfloor (n_{\K}-1)/2 \right\rfloor}$ is a binomial coefficient.
This case is therefore dealt with on \cite[p.~55]{SunleyThesis} using the preceding estimates in the following way:
\begin{align*}
    | \I_{\K}(x) - \kappa_{\K} x |
    &\leq | \I_{\K}(x) | + \kappa_{\K} x \\
    &\leq (a_7 + a_3) (\log |\Delta_{\K}|)^{n_{\K}-1} x^{1 - \frac{2}{n_{\K}+1}} x^{\frac{2}{n_{\K}+1}}\\
    &\leq (a_7 + a_3) (2n_{\K})^{\frac{2}{n_{\K}+1}} |\Delta_{\K}|^{\frac{1}{n_{\K}+1}} (\log |\Delta_{\K}|)^{n_{\K}} x^{1 - \frac{2}{n_{\K}+1}}.
\end{align*}
Now, one can verify that 
\begin{equation}\label{eqn:minor_edits_time}
    (a_7 + a_3) (2n_{\K})^{\frac{2}{n_{\K}+1}} \leq a_1
\end{equation}
for $n_{\K} \geq 1$, so the first case of Sunley's theorem holds.
The case $x > 2n_{\K}\sqrt{|\Delta_{\K}|}$ is handled using complex analysis and a ``moving the line of integration'' argument, but the end result replicates \eqref{eq:Sunley}, as expected. In particular, the $a_1$ term arises during this aspect of the proof, and this is the reason one does not need a stricter upper bound in \eqref{eqn:minor_edits_time}.
It follows that \eqref{eq:Sunley} holds for $x>0$.

\begin{remark}
If $\K= \Q$, then $\I_{\Q}(x) = x + O^{\star}(1)$. However, $\I_{\Q}(x) = \lfloor x \rfloor \leq x$ is more precise.
This is one reason Rosser--Schoenfeld obtain better error estimates in Mertens' theorems for $\K = \Q$ \cite{Rosser};  the fact that the Riemann zeta function has no exceptional zero provides them 
more options as well.
\end{remark}

\subsection{Preparatory lemmas to prove \eqref{eq:A1} and \eqref{eq:A2}}

Before we establish \eqref{eq:A1} and \eqref{eq:A2} in Section \ref{Section:Proof:A}, we need 
several technical lemmas to estimate
\begin{equation*}
    \sum_{N(\PP)\leq x} \I_{\K} \bigg(\frac{x}{N(\PP)}\bigg) \log{N(\PP)}\qquad\text{and}\qquad\sum_{N(\PP)\leq x} \log{N(\PP)} \sum_{j\geq 2} \I_{\K} \left(\frac{x}{N(\PP^j)}\right).
\end{equation*}
We need the following result of Rosser--Schoenfeld \cite[Thm.~9]{Rosser}: 
\begin{equation}\label{eq:Rosser}
    \theta(x) = \sum_{p\leq x}\log p  < 1.01624x < 1.1x,
    \quad \text{for $x>0$}.
\end{equation}
A recent improvement on \eqref{eq:Rosser} yields smaller constants
throughout; see Remark \ref{Remark:Theta}.

\begin{lemma}\label{Lemma:Painful}
    For $\alpha \geq 0$ and $x \geq 2$, 
    \begin{equation*}
        \sum_{p\leq x} \frac{\log p}{p^{\alpha}}
        <
        \begin{cases}
            \dfrac{1.1}{1-\alpha}x^{1-\alpha} & \text{if $0 \leq \alpha \leq 1$},\\[10pt]
            \log x & \text{if $\alpha=1$},\\[5pt]
            \dfrac{1.1\,\alpha}{(\alpha - 1)2^{\alpha-1}}  & \text{if $\alpha > 1$}.
        \end{cases}
    \end{equation*}
\end{lemma}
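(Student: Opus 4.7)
The plan is to reduce the sum to the Chebyshev theta function via Abel summation and then substitute the Rosser--Schoenfeld bound \eqref{eq:Rosser}. Specifically, setting $a(p) = \log p$ (so that the partial sum is $\theta(t)$) and $f(t) = t^{-\alpha}$, Abel summation yields, for $x \geq 2$,
\[
\sum_{p \leq x} \frac{\log p}{p^{\alpha}} \;=\; \frac{\theta(x)}{x^{\alpha}} \;+\; \alpha \int_{2}^{x} \frac{\theta(t)}{t^{\alpha+1}}\, dt .
\]
The three cases of the lemma then correspond to different ranges of this integral, and will be handled essentially in parallel once this identity is in hand.

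For $0 \leq \alpha < 1$, I would insert $\theta(t) < 1.1\, t$ into both the boundary term and the integrand. The boundary contributes $< 1.1\, x^{1-\alpha}$, while the integral contributes $1.1\,\alpha \int_{2}^{x} t^{-\alpha}\, dt = \tfrac{1.1\,\alpha}{1-\alpha}\bigl(x^{1-\alpha} - 2^{1-\alpha}\bigr)$. Adding these and dropping the (negative) $2^{1-\alpha}$ piece telescopes the coefficients of $x^{1-\alpha}$ to $\tfrac{1.1}{1-\alpha}$, as required. For $\alpha > 1$, the same substitution is performed, but now $\alpha\int_{2}^{x} t^{-\alpha}\,dt = \tfrac{\alpha}{\alpha-1}(2^{1-\alpha} - x^{1-\alpha})$, so the combined coefficient of $x^{1-\alpha}$ is $1.1\bigl(1 - \tfrac{\alpha}{\alpha-1}\bigr) = -\tfrac{1.1}{\alpha-1} < 0$. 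Dropping that negative term leaves exactly $\tfrac{1.1\,\alpha\, 2^{1-\alpha}}{\alpha-1} = \tfrac{1.1\,\alpha}{(\alpha-1)\, 2^{\alpha-1}}$.

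The boundary case $\alpha = 1$ is the delicate one and will be the main obstacle: direct substitution of $\theta(t) < 1.1\,t$ into the Abel formula yields $\tfrac{\theta(x)}{x} + \int_{2}^{x}\tfrac{\theta(t)}{t^{2}}\,dt < 1.1 + 1.1\log(x/2)$, which grows like $1.1\log x$ and therefore misses the claimed clean $\log x$ by a multiplicative constant. To recover the sharp coefficient, I would instead bound $\sum_{p \leq x}\tfrac{\log p}{p} \leq \sum_{n \leq x}\tfrac{\Lambda(n)}{n}$ and exploit the Chebyshev identity
\[
\log \lfloor x \rfloor ! \;=\; \sum_{d \leq x} \Lambda(d) \lfloor x/d \rfloor \;\geq\; x \sum_{d \leq x} \frac{\Lambda(d)}{d} \;-\; \psi(x),
\]
so that $\sum_{d \leq x} \Lambda(d)/d \leq (\log\lfloor x \rfloor ! + \psi(x))/x$. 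Combined with Stirling and a sharp explicit $\psi$-bound (in place of the weaker $1.1\,t$ used above), this brings the estimate within an additive constant of $\log x$; invoking Rosser--Schoenfeld's explicit Mertens' first theorem (which gives $\sum_{p\leq x}\log p /p < \log x - E + o(1)$ with $E > 0$) then yields $< \log x$ for $x$ beyond an explicit threshold, and the finite remaining range is dispatched by direct numerical verification.

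\textbf{Main obstacle.} All three cases share the same Abel-summation framework, and Cases $\alpha < 1$ and $\alpha > 1$ permit the loose bound $\theta(t) < 1.1\, t$ because they already carry a free constant multiple. The pinch is at $\alpha = 1$: here the naive bound is off by a factor of $1.1$, and extracting the clean $\log x$ requires either a Chebyshev--Stirling computation with a tighter explicit bound on $\psi$ or a direct appeal to an explicit Mertens' first theorem, followed by a numerical check on a bounded initial range.
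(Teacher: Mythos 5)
Your proposal is correct and follows essentially the same route as the paper: partial summation to $\frac{\theta(x)}{x^{\alpha}}+\alpha\int_2^x\theta(t)t^{-\alpha-1}\,dt$, insertion of $\theta(t)<1.1\,t$, and the same coefficient bookkeeping (dropping the negative term in each of the cases $\alpha<1$ and $\alpha>1$). The only difference is at $\alpha=1$, where the paper simply cites Rosser--Schoenfeld (3.24) for $\sum_{p\le x}\log p/p<\log x$ rather than re-deriving it via the Chebyshev--Stirling argument you sketch; your longer route would also work but is unnecessary.
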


\begin{proof}
    Rosser--Schoenfeld \cite[(3.24)]{Rosser} established the result for $\alpha = 1$.
    Suppose $x \geq 2$. 
    For $\alpha > 0$ with $\alpha \neq 1$, partial summation and \eqref{eq:Rosser} yield
    \begin{align*}
        \sum_{p\leq x} \frac{\log p}{p^{\alpha}}
        &= \frac{\theta(x)}{x^{\alpha}} + \alpha\int_2^x \frac{\theta(t)}{t^{\alpha+1}}\,dt 
        < 1.1\left( \frac{1}{x^{\alpha-1}} + \alpha \int_2^x \frac{dt}{t^{\alpha}} \right) \\
        &=
        \begin{cases}
            1.1 \left( x^{1-\alpha} + \dfrac{\alpha}{1-\alpha} \left( x^{1-\alpha} - 2^{1-\alpha} \right) \right)  & \text{if $0<\alpha < 1$},\\[10pt]
            1.1 \left(\dfrac{1}{x^{\alpha-1}} + \dfrac{\alpha}{\alpha - 1} \left( \dfrac{1}{2^{\alpha-1}} - \dfrac{1}{x^{\alpha-1}} \right) \right)  & \text{if $\alpha > 1$},\\
        \end{cases}
    \end{align*}
    which implies the desired result for $\alpha\neq 1$.
\end{proof}

The preceding lemma and some computation yield the next lemma.

\begin{lemma}\label{Lemma:Multipart}
    For $j \in \N$, $n_{\K}\geq 2$, and $x \geq 2$,
    \begin{equation*}
        x^{1 - \frac{2}{n_{\K}+1}} \!\!\! \sum_{N(\PP)\leq x} \frac{\log N(\PP)}{N(\PP)^{j (1- \frac{2}{n_{\K}+1})}}
        <
        \begin{cases}
       0.55\,n_{\K}(n_{\K}+1)x  & \text{if $j =1$ or $j=n_{\K}=2$},\\[5pt] 
        n_{\K}x^{ 1 - \frac{2}{n_{\K}+1} }  \log x & \text{if $(j,n_{\K})= (2,3)$ or $(3,2)$},\\[5pt]
        \dfrac{13.2\, n_{\K} x^{1 - \frac{2}{n_{\K}+1}} }{2^{j /3}}& \text{otherwise}.
    \end{cases}
\end{equation*}
\end{lemma}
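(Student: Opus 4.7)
The plan is to follow the inertia-degree reduction from the Preliminaries, then appeal to Lemma \ref{Lemma:Painful} in each of its three branches. Writing $\alpha = j(1 - 2/(n_{\K}+1)) = j(n_{\K}-1)/(n_{\K}+1)$, the opening move is to note $1/p^{\alpha f_i} \leq 1/p^{\alpha}$ for $f_i \geq 1$, $p \geq 2$, $\alpha > 0$, which combined with \eqref{eq:InertiaSum} yields
\[
\sum_{N(\PP) \leq x} \frac{\log N(\PP)}{N(\PP)^{\alpha}} \;\leq\; \sum_{p \leq x} \log p \sum_{f_i} \frac{f_i}{p^{\alpha f_i}} \;\leq\; n_{\K} \sum_{p \leq x} \frac{\log p}{p^{\alpha}}.
\]
This reduces matters to Lemma \ref{Lemma:Painful}, and a direct check aligns the three branches with the three cases of the statement: $\alpha < 1$ captures $j=1$ (any $n_{\K} \geq 2$) together with $(j,n_{\K})=(2,2)$; $\alpha = 1$ captures $(j,n_{\K}) \in \{(2,3),(3,2)\}$; and $\alpha > 1$ is everything else.

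For $j = 1$, the first branch of Lemma \ref{Lemma:Painful} together with the clean cancellation $x^{1-\alpha}\cdot x^{1 - 2/(n_{\K}+1)} = x$ gives exactly $0.55\, n_{\K}(n_{\K}+1)\, x$. The $\alpha=1$ case is immediate. For the ``otherwise'' regime I would exploit two elementary inequalities: $\alpha \geq 6/5$ throughout (the minimum is attained at $(j,n_{\K})=(2,4)$), which gives $\alpha/(\alpha-1) \leq 6$; and $\alpha \geq j/3$, since $(n_{\K}-1)/(n_{\K}+1) \geq 1/3$ for $n_{\K} \geq 2$. Together these yield
\[
\frac{1.1\,\alpha}{(\alpha-1)\,2^{\alpha-1}} \;\leq\; \frac{6.6}{2^{\alpha-1}} \;=\; \frac{13.2}{2^{\alpha}} \;\leq\; \frac{13.2}{2^{j/3}},
\]
and multiplying by $n_{\K}\, x^{1 - 2/(n_{\K}+1)}$ produces the stated bound.

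The main obstacle is the edge case $(j,n_{\K})=(2,2)$: the reduction above only yields $6.6\, x^{2/3}$, which is $\leq 3.3\, x = 0.55\, n_{\K}(n_{\K}+1)\, x$ only once $x \geq 8$. In the short range $2 \leq x < 8$ only the primes $p \in \{2,3,5,7\}$ contribute to $\sum_{p \leq x} \log p / p^{2/3}$, so I would bound this finite sum by its value at $x=7$ (approximately $2.049$) and then check by an elementary calculation that $n_{\K}\, x^{1/3} \cdot 2.049 = 4.098\, x^{1/3} \leq 3.3\, x$ for all $x \geq 2$ (equivalently, $x^{2/3} \geq 1.242$, which holds for $x \geq 1.39$). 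This small-$x$ verification is presumably the ``some computation'' flagged in the statement.
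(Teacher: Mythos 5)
Your proposal is correct and follows the paper's proof almost exactly: the same inertia-degree reduction to $n_{\K}\sum_{p\le x}\log p\,/\,p^{\alpha}$, the same substitution $\alpha=j(1-\tfrac{2}{n_{\K}+1})$ into Lemma \ref{Lemma:Painful}, and the same identification of $(j,n_{\K})=(2,4)$ as the minimizer giving $\alpha/(\alpha-1)\le 6$ and $\alpha\ge j/3$ in the third case. The only divergence is the edge case $(j,n_{\K})=(2,2)$: the paper sidesteps your small-$x$ computation by noting that $\sum_{p\le x}\log p\,/\,p^{2/3}\le\sum_{p\le x}\log p\,/\,p^{1/3}$ and applying the first branch of Lemma \ref{Lemma:Painful} with the $j=1$ exponent $\alpha=\tfrac13$, which yields $1.65\,x^{2/3}$ and hence $0.55\,n_{\K}(n_{\K}+1)x$ after multiplying by $n_{\K}x^{1/3}$, uniformly for $x\ge 2$; your direct use of $\alpha=\tfrac23$ plus the finite check over $p\in\{2,3,5,7\}$ on $2\le x<8$ is also valid, just less economical.
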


\begin{proof}
First observe that \eqref{eq:InertiaSum} implies
\begin{align*}
\sum_{N(\PP)\leq x} \frac{\log N(\PP)}{N(\PP)^{j (1- \frac{1}{n_{\K}})}}
&\leq \sum_{p\leq x} \sum_{f_i} \frac{\log (p^{f_i}) }{ p^{f_i j (1- \frac{2}{n_{\K}+1} )} }   \\
&\leq \sum_{p\leq x} \sum_{f_i} f_i \frac{\log p }{ p^{j (1- \frac{2}{n_{\K}+1}) } }   \\
&\leq n_{\K} \sum_{p\leq x}\frac{\log p }{ p^{j (1- \frac{2}{n_{\K}+1}) } }  ,  
 \end{align*}
in which $\sum_{f_i}$ denotes the sum over the inertia degrees $f_i$ of the prime ideals lying over the rational prime $p$. Next substitute 
\begin{equation}\label{eq:alpha}
\alpha = j  \bigg(1- \frac{2}{n_{\K}+1} \bigg)
\end{equation}
into Lemma \ref{Lemma:Painful}, multiply by $x^{1 - \frac{2}{n_{\K}+1}}$, and obtain
\begin{equation} \label{eq:ContinueBound}
x^{1 - \frac{2}{n_{\K}+1}} \!\!\! \sum_{N(\PP)\leq x} \frac{\log N(\PP)}{N(\PP)^{j (1- \frac{2}{n_{\K}+1})}}
\leq n_{\K} x^{1 - \frac{2}{n_{\K}+1}} \sum_{p\leq x}\frac{\log p }{ p^{j (1- \frac{2}{n_{\K}+1}) } } .
\end{equation}
Refer to Table \ref{Table:Jane} in the case-by-case analysis below.
    
    \begin{table}
    \begin{equation*}
    \begin{array}{c|ccccccccccccc}
    j \backslash n_{\K} & 2 & 3 & 4 & 5 & 6 & 7 & 8 & 9 & 10 & 11 & 12 & 13 & 14\\[5pt]
    \hline
    1 & \color{green!50!black}\frac{1}{3} &\color{green!50!black} \frac{1}{2} &\color{green!50!black} \frac{3}{5} &\color{green!50!black} \frac{2}{3} &\color{green!50!black} \frac{5}{7} &\color{green!50!black} \frac{3}{4} &\color{green!50!black} \frac{7}{9} &\color{green!50!black} \frac{4}{5} &\color{green!50!black} \frac{9}{11} &\color{green!50!black} \frac{5}{6} &\color{green!50!black} \frac{11}{13} &\color{green!50!black} \frac{6}{7} &\color{green!50!black} \frac{13}{15} \\[5pt]
    2&\color{green!50!black}\frac{2}{3} &\color{red} 1 &\color{blue} \color{blue}\frac{6}{5} &\color{blue} \frac{4}{3} &\color{blue} \frac{10}{7} &\color{blue} \frac{3}{2} &\color{blue} \frac{14}{9} &\color{blue} \frac{8}{5} &\color{blue} \frac{18}{11} &\color{blue} \frac{5}{3} &\color{blue} \frac{22}{13} &\color{blue} \frac{12}{7} &\color{blue} \frac{26}{15} \\[5pt]
    3&\color{red}1 &\color{blue} \frac{3}{2} &\color{blue} \frac{9}{5} &\color{blue} 2 &\color{blue} \frac{15}{7} &\color{blue} \frac{9}{4} &\color{blue} \frac{7}{3} &\color{blue} \frac{12}{5} &\color{blue} \frac{27}{11} &\color{blue} \frac{5}{2} &\color{blue} \frac{33}{13} &\color{blue} \frac{18}{7} &\color{blue} \frac{13}{5} \\[5pt]
    4&\color{blue} \frac{4}{3} &\color{blue} 2 &\color{blue} \frac{12}{5} &\color{blue} \frac{8}{3} &\color{blue} \frac{20}{7} &\color{blue} 3 &\color{blue} \frac{28}{9} &\color{blue} \frac{16}{5} &\color{blue} \frac{36}{11} &\color{blue} \frac{10}{3} &\color{blue} \frac{44}{13} &\color{blue} \frac{24}{7} &\color{blue} \frac{52}{15} \\[5pt]
    5&\color{blue} \frac{5}{3} &\color{blue} \frac{5}{2} &\color{blue} 3 &\color{blue} \frac{10}{3} &\color{blue} \frac{25}{7} &\color{blue} \frac{15}{4} &\color{blue} \frac{35}{9} &\color{blue} 4 &\color{blue} \frac{45}{11} &\color{blue} \frac{25}{6} &\color{blue} \frac{55}{13} &\color{blue} \frac{30}{7} &\color{blue} \frac{13}{3} \\[5pt]
    6&\color{blue} 2 &\color{blue} 3 &\color{blue} \frac{18}{5} &\color{blue} 4 &\color{blue} \frac{30}{7} &\color{blue} \frac{9}{2} &\color{blue} \frac{14}{3} &\color{blue} \frac{24}{5} &\color{blue} \frac{54}{11} &\color{blue} 5 &\color{blue} \frac{66}{13} &\color{blue} \frac{36}{7} &\color{blue} \frac{26}{5} \\[5pt]
    7&\color{blue} \frac{7}{3} &\color{blue} \frac{7}{2} &\color{blue} \frac{21}{5} &\color{blue} \frac{14}{3} &\color{blue} 5 &\color{blue} \frac{21}{4} &\color{blue} \frac{49}{9} &\color{blue} \frac{28}{5} &\color{blue} \frac{63}{11} &\color{blue} \frac{35}{6} &\color{blue} \frac{77}{13} &\color{blue} 6 &\color{blue} \frac{91}{15} \\
    \end{array}
    \end{equation*}
    \caption{Values of $\alpha = j(1-\frac{2}{n_{\K}+1})$ for $j \geq 1$ and $n_{\K} \geq 2$.  Values with $\alpha < 1$ are in {\color{green!50!black}green},
    $\alpha =1$ in {\color{red}red}, and $\alpha >1$ in {\color{blue}blue}.}
    \label{Table:Jane}
    \end{table}

If $j =1$, or if $j=n_{\K}=2$, then $0 < \alpha < 1$ and \eqref{eq:ContinueBound} can be bounded from above by the first case of Lemma \ref{Lemma:Painful}:
\begin{align*}
n_{\K} x^{1 - \frac{2}{n_{\K}+1}} \sum_{p\leq x} \frac{\log p}{p^{j (1- \frac{2}{n_{\K}+1})}}
&< n_{\K}  x^{1 - \frac{2}{n_{\K}+1}} \frac{1.1}{1 - (1- \frac{2}{n_{\K}+1})} x^{1 - (1- \frac{2}{n_{\K}+1})}\\
&= 0.55\,n_{\K}(n_{\K}+1)x.
\end{align*}

If $j=2$ and $n_{\K}=3$, or if $j=3$ and $n_{\K}=2$, then $\alpha = 1$.
In these two cases, the second case of Lemma \ref{Lemma:Painful} immediately yields the desired upper bound.

Otherwise, $\alpha > 1$ and we are in the third case of Lemma \ref{Lemma:Painful}. We must maximize
\begin{equation*}
f(\alpha) = \frac{\alpha}{\alpha -1}
\end{equation*}
over all pairs $(j,n_{\K})$ shown in blue in Table \ref{Table:Jane}.  
Observe that $f'(\alpha)=-(\alpha-1)^{-2}<0$, and hence $f(\alpha)$ decreases for $\alpha > 1$.
Therefore, we must minimize $\alpha$ over all admissible pairs $(j,n_{\K})$.
The definition \eqref{eq:alpha} ensures that $\alpha$ increases in both $j$ and $n_{\K}$, so the desired
maximum can be found by inspection of Table \ref{Table:Jane}.
The maximum value of $f$ occurs at $(j,n_{\K}) = (2,4)$, for which $\alpha = \frac{6}{5}$ and $f(\alpha) = 6$.
Since $n_{\K} \geq 2$, the third case of Lemma \ref{Lemma:Painful} implies 
\begin{align*}
n_{\K}x^{1 - \frac{2}{n_{\K}+1}} \sum_{p\leq x} \frac{\log p}{p^{j (1- \frac{2}{n_{\K}+1})}} 
&< \dfrac{1.1\, n_{\K} \alpha x^{1 - \frac{2}{n_{\K}+1}}}{ \big(\alpha - 1 \big)2^{j (1- \frac{2}{n_{\K}+1})-1}} \\
&\leq \dfrac{6.6\, n_{\K}x^{1 - \frac{2}{n_{\K}+1}}}{ 2^{j (1- \frac{2}{n_{\K}+1})-1}} \\
&\leq \frac{13.2\, n_{\K} x^{1 - \frac{2}{n_{\K}+1}} }{2^{j /3}}. \qedhere
\end{align*}
\end{proof}

Our next two lemmas are estimates obtained with the aid of Lemma \ref{Lemma:Multipart}.
The first one is rather straightforward, but the second is much more involved.

\begin{lemma}\label{Lemma:FirstPower}
For $x\geq2$,
\begin{equation*}
\sum_{N(\PP)\leq x} \I_{\K} \bigg(\frac{x}{N(\PP)}\bigg) \log{N(\PP)} 
= \kappa_{\K} x \sum_{N(\PP)\leq x} \frac{\log{N(\PP)}}{N(\PP)} 
+ 0.55\, \Lambda_{\K}n_{\K}(n_{\K}+1) O^{\star} (x) .
\end{equation*}
\end{lemma}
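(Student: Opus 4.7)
The plan is to apply Sunley's theorem (Theorem \ref{Theorem:Sunley}) pointwise inside the sum, then isolate the main term and control the error using Lemma \ref{Lemma:Multipart} with $j=1$.

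First I would substitute $y = x/N(\PP)$ into \eqref{eq:Sunley}, which is valid because Sunley's estimate holds for all $y > 0$. This yields
\begin{equation*}
\I_{\K}\!\left(\frac{x}{N(\PP)}\right) = \kappa_{\K}\frac{x}{N(\PP)} + O^{\star}\!\left(\Lambda_{\K}\left(\frac{x}{N(\PP)}\right)^{1-\frac{2}{n_{\K}+1}}\right)
\end{equation*}
for every prime ideal $\PP$ with $N(\PP) \leq x$.

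Next I would multiply both sides by $\log N(\PP)$ and sum over $N(\PP)\leq x$. The main term instantly becomes $\kappa_{\K} x \sum_{N(\PP)\leq x} \log N(\PP)/N(\PP)$, matching the target identity. The error contribution is bounded in absolute value by
\begin{equation*}
\Lambda_{\K}\, x^{1-\frac{2}{n_{\K}+1}} \sum_{N(\PP)\leq x} \frac{\log N(\PP)}{N(\PP)^{1-\frac{2}{n_{\K}+1}}}.
\end{equation*}

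Finally I would invoke Lemma \ref{Lemma:Multipart} with $j=1$, which is precisely the case producing the bound $0.55\, n_{\K}(n_{\K}+1) x$ (since for $j=1$ the exponent $\alpha = 1 - \tfrac{2}{n_{\K}+1} \in (0,1)$ for all $n_{\K} \geq 2$, landing squarely in the first case of that lemma). Combining this with the $O^\star$ notation convention gives the stated expression $0.55\,\Lambda_{\K} n_{\K}(n_{\K}+1) O^{\star}(x)$. There is no real obstacle here: the lemma is essentially a one-line application of Sunley's theorem plus the prepared estimate of Lemma \ref{Lemma:Multipart}, and no further technicalities beyond correctly matching $j=1$ to its case are needed.
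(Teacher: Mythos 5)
Your proposal is correct and follows the paper's proof essentially verbatim: apply Sunley's estimate (valid for all arguments $y>0$) termwise, read off the main term, and bound the error sum via Lemma \ref{Lemma:Multipart} with $j=1$, whose first case applies since $\alpha = 1-\tfrac{2}{n_{\K}+1}\in(0,1)$ for all $n_{\K}\geq 2$. Nothing further is needed.
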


\begin{proof}
Theorem \ref{Theorem:Sunley} and Lemma \ref{Lemma:Multipart} with $j  = 1$ imply
\begin{align*}
&\sum_{N(\PP)\leq x} \I_{\K} \bigg(\frac{x}{N(\PP)}\bigg) \log{N(\PP)}\\
&\qquad= \sum_{N(\PP)\leq x} \Bigg( \kappa_{\K}  \bigg(\frac{x}{N(\PP)}\bigg) 
+ O^{\star}\bigg(\Lambda_{\K} \bigg(\frac{x}{N(\PP)}\bigg)^{1-\frac{2}{n_{\K}+1}} \bigg) \Bigg) \log N(\PP)  \\
&\qquad= \kappa_{\K} x \!\!  \sum_{N(\PP)\leq x} \frac{\log{N(\PP)}}{N(\PP)} 
+ \Lambda_{\K} O^{\star}\Bigg(x^{1 - \frac{2}{n_{\K}+1}}\sum_{N(\PP)\leq x}  
\frac{ \log N(\PP)}{N(\PP)^{1-\frac{2}{n_{\K}+1}} } \Bigg)  \\
&\qquad= \kappa_{\K} x \!\!  \sum_{N(\PP)\leq x} \frac{\log{N(\PP)}}{N(\PP)} 
+ 0.55\, \Lambda_{\K}n_{\K}(n_{\K}+1) O^{\star} (x)  .\qedhere
\end{align*}
\end{proof}

\begin{lemma}\label{Lemma:HigherPowers}
For $x\geq 2$,
\begin{equation*}
\sum_{N(\PP)\leq x} \log{N(\PP)} \sum_{j\geq 2} \I_{\K} \left(\frac{x}{N(\PP^j)}\right) 
\,=\, O^{\star}(\Xi_{\K}(x)),
\end{equation*}
in which
\begin{equation}\label{eq:Xi}
\Xi_{\K}(x)=
\begin{cases}
\kappa_{\K} n_{\K} x + \Lambda_{\K} O^{\star}( 3.3\,x + 2x^{\frac{1}{3}}\log x + 50.8\,x^{\frac{1}{3}}  )  & \text{if $n_{\K} = 2$}, \\[5pt]
\kappa_{\K} n_{\K} x + \Lambda_{\K} O^{\star}( 3\,x^{ \frac{1}{2}}  \log x +  96\, x^{\frac{1}{2}}  )  & \text{if $n_{\K} = 3$},\\[5pt]
\kappa_{\K} n_{\K} x + \Lambda_{\K} O^{\star}( 40.31 \,n_{\K}x^{1 - \frac{2}{n_{\K}+1}}  )  & \text{if $n_{\K} \geq 4$}. 
\end{cases}
\end{equation}
\end{lemma}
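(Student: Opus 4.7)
The plan is to apply Sunley's estimate (Theorem~\ref{Theorem:Sunley}) to each $\I_{\K}(x/N(\PP^j))$, writing it as $\kappa_{\K}\,x/N(\PP)^j + O^{\star}\bigl(\Lambda_{\K}(x/N(\PP)^j)^{1-2/(n_{\K}+1)}\bigr)$. Summing over $N(\PP)\leq x$ and $j\geq 2$ then splits the left-hand side into a ``main term'' proportional to $\kappa_{\K}$ and an ``error term'' proportional to $\Lambda_{\K}$; these will furnish the two pieces of $\Xi_{\K}(x)$. Since Sunley's bound is valid for all positive arguments, I can leave the inner sum over $j\geq 2$ untruncated: when $x/N(\PP)^j<1$ the summatory function $\I_{\K}$ vanishes, but Sunley's inequality still holds harmlessly.

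For the main term, summing the geometric series in $j$ produces
\begin{equation*}
\kappa_{\K}\, x \sum_{N(\PP)\leq x} \log N(\PP) \sum_{j\geq 2} \frac{1}{N(\PP)^j}
\,=\, \kappa_{\K}\, x \sum_{N(\PP)\leq x} \frac{\log N(\PP)}{N(\PP)(N(\PP)-1)},
\end{equation*}
which I plan to bound by $\kappa_{\K}\, n_{\K}\, x$---the leading term of $\Xi_{\K}(x)$. Breaking the prime-ideal sum over the rational primes $p$ beneath $\PP$ and invoking both $p^f(p^f-1)\geq p(p-1)$ for $f\geq 1$ and the inertia-degree bound \eqref{eq:InertiaSum}, this sum is at most $n_{\K}\sum_p\tfrac{\log p}{p(p-1)}$, and the constant $\sum_p\tfrac{\log p}{p(p-1)}$ is easily shown to be less than $1$ (for example, via $\tfrac{1}{p(p-1)}\leq\tfrac{2}{p^2}$ combined with $\sum_p\tfrac{\log p}{p^2}<\tfrac{1}{2}$).

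For the error term, I apply Lemma~\ref{Lemma:Multipart} to each $j\geq 2$ and sum. Consulting Table~\ref{Table:Jane}, the exponent $\alpha=j(1-2/(n_{\K}+1))$ satisfies $\alpha<1$ only at $(j,n_{\K})=(2,2)$, $\alpha=1$ at $(j,n_{\K})=(3,2)$ and $(2,3)$, and $\alpha>1$ at every remaining cell with $j\geq 2$. The third-case contributions assemble into a geometric tail $\sum_{j\geq j_0}2^{-j/3}=\tfrac{2^{-j_0/3}}{1-2^{-1/3}}$, taken at $j_0=4,3,2$ for $n_{\K}=2$, $n_{\K}=3$, and $n_{\K}\geq 4$ respectively; multiplying by $13.2\,n_{\K}$ reproduces the constants $50.8$, $96$, and $40.31$ appearing in $\Xi_{\K}(x)$, while the first- and second-case boundary contributions supply the remaining pieces $3.3\,x$, $2\,x^{1/3}\log x$, and $3\,x^{1/2}\log x$. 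The main obstacle is bookkeeping: correctly cataloguing which $(j,n_{\K})$ pairs trigger each case of Lemma~\ref{Lemma:Multipart}, keeping the logarithmic boundary contributions (from $\alpha=1$) separate from the geometric tail, and verifying each numerical constant after the geometric-series evaluation.
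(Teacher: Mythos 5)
Your proposal is correct and follows essentially the same route as the paper: apply Sunley's estimate termwise, bound the resulting main term by $\kappa_{\K} n_{\K} x$ via the inertia-degree inequality \eqref{eq:InertiaSum} and $\sum_p \frac{\log p}{p(p-1)}<1$, and handle the error term by the same case-by-case application of Lemma~\ref{Lemma:Multipart} with the geometric tails $\sum_{j\geq j_0} 2^{-j/3}$ starting at $j_0=4,3,2$. All of your constants and your cataloguing of which $(j,n_{\K})$ pairs fall into each case of Lemma~\ref{Lemma:Multipart} match the paper's proof.
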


\begin{proof}
Theorem \ref{Theorem:Sunley} and the total multiplicativity of the norm imply that
\begin{align}
&\sum_{N(\PP)\leq x} \log{N(\PP)} \sum_{j\geq 2} \I_{\K} \left(\frac{x}{N(\PP^j)}\right) \\
&\quad= \sum_{N(\PP)\leq x} \log{N(\PP)} \sum_{j\geq 2} \Bigg( \frac{\kappa_{\K} x}{N(\PP^j)} 
+ O^{\star}\bigg( \Lambda_{\K}\bigg(\frac{x}{N(\PP^j)}\bigg)^{1 - \frac{2}{n_{\K}+1}}  \bigg) \Bigg) \nonumber \\
&\quad= \kappa_{\K} x  \underbrace{ \sum_{N(\PP)\leq x} \sum_{j\geq 2} \frac{\log{N(\PP)}}{N(\PP)^j}  }_{\text{Term 1}}
+ \Lambda_{\K}  O^{\star} \Bigg(
\underbrace{ x^{1 - \frac{2}{n_{\K}+1}}  \!\!\!  \sum_{N(\PP)\leq x}\sum_{j\geq 2} \frac{\log{N(\PP)} }{N(\PP)^{j(1 - \frac{2}{n_{\K}+1})}}  }_{\text{Term 2}}
 \Bigg) .\label{eq:Terms}
\end{align}

\medskip\noindent\textit{Term 1.}
Use \eqref{eq:InertiaSum} to obtain
\begin{align*}
    \sum_{N(\PP)\leq x} \sum_{j\geq 2} \frac{\log{N(\PP)}}{N(\PP)^j}
    &\leq  \sum_{p\leq x} \sum_{f_i} \sum_{j\geq 2} \frac{\log (p^{f_i})}{(p^{f_i})^j} \\
    &\leq  \sum_{p\leq x} \sum_{j\geq 2} \bigg(\sum_{f_i}f_i\bigg) \frac{\log{p}}{p^{j}} \\
    &\leq  n_{\K}  \sum_{p} \sum_{j\geq 2} \frac{\log{p}}{p^{j}} \\
    &= n_{\K}  \sum_{p}\frac{\log{p}}{p(p-1)} \\
    &< n_{\K},
\end{align*}
in which $\sum_{f_i}$ denotes the sum over the inertia degrees $f_i$ of the prime ideals lying over the
rational prime $p$ (the final sum is bounded above by $0.7554$). 

\medskip\noindent\textit{Term 2.}
Apply Lemma \ref{Lemma:Multipart} and obtain
\begin{align*}
&x^{1 - \frac{2}{n_{\K}+1}}\sum_{N(\PP)\leq x}  \sum_{j\geq 2} \frac{\log{N(\PP)} }{N(\PP)^{j(1 - \frac{2}{n_{\K}+1})}}  \\
&\qquad = \sum_{j\geq 2}  \bigg( x^{1 - \frac{2}{n_{\K}+1}}\sum_{N(\PP)\leq x}  \frac{\log{N(\PP)} }{N(\PP)^{j(1 - \frac{2}{n_{\K}+1})}} \bigg) \\
&\qquad = 
\begin{cases}
\displaystyle 0.55\, n_{\K}(n_{\K}+1)x + n_{\K}x^{1 - \frac{2}{n_{\K}+1}}\log x + 13.2\,n_{\K}x^{1 - \frac{2}{n_{\K}+1}}\sum_{j \geq 4} \frac{1}{2^{j/3}}& \text{if $n_{\K}=2$},\\[5pt]
\displaystyle n_{\K}x^{1 - \frac{2}{n_{\K}+1}} \log x +  13.2\,n_{\K}x^{1 - \frac{2}{n_{\K}+1}}\sum_{j\geq 3} \frac{1}{2^{j/3}}& \text{if $n_{\K}=3$},\\[5pt]
\displaystyle 13.2\,n_{\K}x^{1 - \frac{2}{n_{\K}+1}}\sum_{j\geq 2} \frac{1}{2^{j/3}}& \text{if $n_{\K}\geq 4$},
\end{cases}
\\
&\qquad \leq 
\begin{cases}
\displaystyle 3.3\,x + 2x^{\frac{1}{3}}\log x + 50.8\,x^{\frac{1}{3}}  & \text{if $n_{\K}=2$},\\[5pt]
\displaystyle 3\,x^{ \frac{1}{2}}  \log x +  96\, x^{\frac{1}{2}} & \text{if $n_{\K}=3$},\\[5pt]
\displaystyle 40.31 \,n_{\K}x^{1 - \frac{2}{n_{\K}+1}} & \text{if $n_{\K}\geq 4$},
\end{cases}
\end{align*}
To complete the proof, return to \eqref{eq:Terms} and use the estimates above.
\end{proof}

\subsection{Proof of \eqref{eq:A1} and \eqref{eq:A2}}\label{Section:Proof:A}

Consider
\begin{equation*}
T_{\K}(x) = \log\bigg(\prod_{N(\mathfrak{a})\leq x}N(\mathfrak{a})\bigg) = \sum_{n\leq x} I_{\K}(n)\log{n},
\end{equation*}
in which $\mathfrak{a}\subset\O_{\K}$ runs over the integral ideals of $\K$.
In the next two lemmas, we approximate $T_{\K}(x)$ in two different ways.  Comparing the resulting expressions will complete the proof of \eqref{eq:A1}. 
The following lemma is an explicit version of Weber's theorem, which states $T_{\K}(x) = \kappa_{\K} x \log x + O(x)$ \cite[p.~128]{DiamondHalberstam}.

\begin{lemma}\label{Lemma:FirstBound}
For $x \geq 2$,
\begin{equation*}
T_{\K}(x) =  \kappa_{\K} x \log{x} + \left(  \frac{(n_{\K}+1)^2}{2(n_{\K}-1)}\Lambda_{\K}   +\kappa_{\K}\right) O^{\star}(x).
\end{equation*}
\end{lemma}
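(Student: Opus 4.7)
The plan is to apply Abel summation to convert $T_{\K}(x)=\sum_{n\leq x}I_{\K}(n)\log n$ into an expression involving the summatory function $\I_{\K}(t)$, and then substitute Sunley's estimate from Theorem~\ref{Theorem:Sunley}. Since $\log n$ is differentiable, Abel summation yields
\begin{equation*}
    T_{\K}(x) \,=\, \I_{\K}(x)\log x - \int_1^x \frac{\I_{\K}(t)}{t}\,dt.
\end{equation*}
Inserting $\I_{\K}(t)=\kappa_{\K}t+O^{\star}\!\bigl(\Lambda_{\K}\,t^{1-\frac{2}{n_{\K}+1}}\bigr)$ produces the main term
$\kappa_{\K}x\log x - \kappa_{\K}(x-1)$, plus two error contributions that I will bound in turn.

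The boundary error is bounded by $\Lambda_{\K}\, x^{1-\frac{2}{n_{\K}+1}}\log x$. To turn this into a multiple of $x$, I would use the elementary fact that $\log x/x^{\beta}$ is maximized at $x=e^{1/\beta}$ with value $1/(\beta e)$; taking $\beta=2/(n_{\K}+1)$ (which for $n_{\K}\geq 2$ gives $e^{1/\beta}\geq e^{3/2}>2$, so the maximum is attained in our range $x\geq 2$) gives
\begin{equation*}
    x^{1-\frac{2}{n_{\K}+1}}\log x \,\leq\, \frac{n_{\K}+1}{2e}\,x \qquad (x\geq 2).
\end{equation*}
The integral error is
\begin{equation*}
    \Lambda_{\K}\int_1^x t^{-\frac{2}{n_{\K}+1}}\,dt \,=\, \Lambda_{\K}\cdot\frac{n_{\K}+1}{n_{\K}-1}\Bigl(x^{\frac{n_{\K}-1}{n_{\K}+1}}-1\Bigr) \,\leq\, \Lambda_{\K}\cdot\frac{n_{\K}+1}{n_{\K}-1}\,x.
\end{equation*}

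Combining everything, the total error is bounded by
\begin{equation*}
    \Lambda_{\K}\,x\left(\frac{n_{\K}+1}{2e}+\frac{n_{\K}+1}{n_{\K}-1}\right) + \kappa_{\K}\,x,
\end{equation*}
where the extra $\kappa_{\K}x$ absorbs $\kappa_{\K}(x-1)$. To reach the stated coefficient, I observe the key algebraic inequality $\frac{1}{2e}+\frac{1}{n_{\K}-1}\leq \frac{1}{2}+\frac{1}{n_{\K}-1}=\frac{n_{\K}+1}{2(n_{\K}-1)}$, which uses nothing beyond $1/(2e)\leq 1/2$. This immediately gives
\begin{equation*}
    |T_{\K}(x) - \kappa_{\K}x\log x| \,\leq\, \left(\frac{(n_{\K}+1)^2}{2(n_{\K}-1)}\Lambda_{\K}+\kappa_{\K}\right)x,
\end{equation*}
which is the desired bound.

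There is no real obstacle here — the argument is a standard partial-summation application of Sunley's theorem. The only subtlety is packaging the two error pieces ($\Lambda_{\K}x^{1-2/(n_{\K}+1)}\log x$ from the boundary and $\Lambda_{\K}x^{(n_{\K}-1)/(n_{\K}+1)}$ from the integral) in a way that matches the precise coefficient $\frac{(n_{\K}+1)^2}{2(n_{\K}-1)}$. Verifying that the maximum of $\log x/x^{2/(n_{\K}+1)}$ lies inside $[2,\infty)$ for all $n_{\K}\geq 2$ is what lets us avoid worse constants at the lower endpoint.
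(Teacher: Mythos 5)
Your proof is correct and follows essentially the same route as the paper: partial summation to express $T_{\K}(x)$ via $\I_{\K}$, substitution of Sunley's estimate, the elementary comparison of $\log x$ with $x^{2/(n_{\K}+1)}$ for the boundary term, and the direct integral bound for the remainder. The only (harmless) differences are that you integrate from $1$ rather than $2$ and first obtain the sharper constant $\tfrac{n_{\K}+1}{2e}$ before relaxing it to $\tfrac{n_{\K}+1}{2}$, where the paper uses $\log x \le \tfrac{1}{2}(n_{\K}+1)x^{2/(n_{\K}+1)}$ directly.
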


\begin{proof}
Partial summation and Theorem \ref{Theorem:Sunley} imply 
\begin{align}
T_{\K}(x)
&= \sum_{n\leq x}I_{\K}(n)\log{n} = \sum_{2\leq n\leq x}I_{\K}(n)\log{n}\nonumber\\
&= \I_{\K}(x)\log{x} - \int_{2}^{x}\frac{\I_{\K}(t)}{t}dt\nonumber\\
&= \big(\kappa_{\K} x + O^{\star}(\Lambda_{\K}x^{1-\frac{2}{n_{\K}+1}})\big) \log{x} \nonumber\\
&\qquad\quad +O^{\star}\bigg( \int_{2}^{x}\frac{\kappa_{\K} t}{t}\,dt + \int_{2}^{x}\frac{\Lambda_{\K}t^{1-\frac{2}{n_{\K}+1}}}{t}\,dt \bigg).\label{eqn:comebacktome}
\end{align}
Calculus reveals that $\log{x} \leq \alpha x^{1/\alpha}$ for $x \geq 1$ and $\alpha > 0$.  
Let $\alpha = \frac{n_{\K}+1}{2}$ and deduce
\begin{equation*}
\log x <  \tfrac{1}{2}( n_{\K}+1) x^{ \frac{2}{n_{\K}+1}}.
\end{equation*}
Therefore,
\begin{equation*}
    \big(\kappa_{\K} x + O^{\star}(\Lambda_{\K}x^{1-\frac{2}{n_{\K}+1}})\big) \log{x}
    = \kappa_{\K} x \log{x} + \tfrac{1}{2}\Lambda_{\K} ( n_{\K}+1 )O^{\star}(x).
\end{equation*}
Since $n_{\K} \geq 2$,
\begin{equation*}
    \int_2^x t^{-\frac{2}{n_{\K}+1}}dt
     =\left.\frac{n_{\K}+1}{n_{\K} - 1}t^{\frac{n_{\K}-1}{n_{\K}+1}}\right|_{2}^{x} 
     < \frac{n_{\K}+1}{n_{\K} - 1}x^{\frac{n_{\K}-1}{n_{\K}+1}} 
     \leq \frac{n_{\K}+1}{n_{\K} - 1} x.
\end{equation*}
Return to \eqref{eqn:comebacktome} and observe that
\begin{equation*}
	\int_{2}^{x}\frac{\kappa_{\K} t}{t}\,dt +\int_{2}^{x}\frac{\Lambda_{\K}t^{1-\frac{2}{n_{\K}+1}}}{t}\,dt 
    = \bigg(\kappa_{\K}+\frac{n_{\K}+1}{n_{\K} - 1}\Lambda_{\K} \bigg) O^{\star}(x).
\end{equation*}
Put this all together, recall that $n_{\K}\geq 2$, and obtain
\begin{align*}
T_{\K}(x) 
&=  \kappa_{\K} x \log{x} + \tfrac{1}{2}\Lambda_{\K} ( n_{\K}+1 )O^{\star}(x)
+ \bigg(\kappa_{\K}+\frac{n_{\K}+1}{n_{\K} - 1}\Lambda_{\K} \bigg) O^{\star}(x)\\
&= \kappa_{\K} x \log{x} +
 \left(  \frac{(n_{\K}+1)^2}{2(n_{\K}-1)}\Lambda_{\K}   +\kappa_{\K}\right) O^{\star}(x). \qedhere
\end{align*}
\end{proof}

Now, we derive a second explicit approximation for $T_{\K}(x)$.

\begin{lemma}\label{Lemma:SecondBound}
For $x \geq 2$,
\begin{equation*}
T_{\K}(x) = \kappa_{\K} x \sum_{N(\PP)\leq x} \frac{\log{N(\PP)}}{N(\PP)} +  0.55\, \Lambda_{\K}n_{\K}(n_{\K}+1) O^{\star} (x)+ O^{\star}(\Xi_{\K}(x)),
\end{equation*}
in which $\Xi_{\K}(x)$ is given by \eqref{eq:Xi}.
\end{lemma}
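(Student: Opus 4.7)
The plan is to rewrite $T_{\K}(x)$ in a dual form using unique factorization of ideals in the Dedekind domain $\O_{\K}$, then match the pieces against the two preceding lemmas. The guiding identity is that for any ideal $\mathfrak{a}$,
\begin{equation*}
\log N(\mathfrak{a}) \,=\, \sum_{\PP} v_{\PP}(\mathfrak{a}) \log N(\PP) \,=\, \sum_{\PP} \log N(\PP) \sum_{j \geq 1} \mathbf{1}[\PP^j \mid \mathfrak{a}],
\end{equation*}
where $v_{\PP}(\mathfrak{a})$ is the $\PP$-adic valuation of $\mathfrak{a}$. Plugging this into the definition of $T_{\K}(x)$ and swapping the order of summation should yield
\begin{equation*}
T_{\K}(x) \,=\, \sum_{\PP} \log N(\PP) \sum_{j \geq 1} \# \{ \mathfrak{a} : N(\mathfrak{a}) \leq x,\, \PP^j \mid \mathfrak{a} \}.
\end{equation*}

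The next step is to simplify the inner count. Since $\PP^j \mid \mathfrak{a}$ iff $\mathfrak{a} = \PP^j \mathfrak{b}$ for some integral ideal $\mathfrak{b}$, and the norm is totally multiplicative, the condition $N(\mathfrak{a}) \leq x$ becomes $N(\mathfrak{b}) \leq x / N(\PP^j)$. Hence the count equals $\I_{\K}(x / N(\PP^j))$, and any prime ideal with $N(\PP) > x$ contributes zero. Restricting accordingly and separating the $j=1$ term from the tail gives
\begin{equation*}
T_{\K}(x) \,=\, \sum_{N(\PP)\leq x} \I_{\K}\!\left(\tfrac{x}{N(\PP)}\right) \log N(\PP) \,+\, \sum_{N(\PP)\leq x} \log N(\PP) \sum_{j \geq 2} \I_{\K}\!\left(\tfrac{x}{N(\PP^j)}\right).
\end{equation*}

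Finally, I would substitute Lemma \ref{Lemma:FirstPower} into the first sum, which produces the main term $\kappa_{\K} x \sum_{N(\PP)\leq x} \log N(\PP)/N(\PP)$ together with the error $0.55\,\Lambda_{\K} n_{\K}(n_{\K}+1)\,O^{\star}(x)$, and substitute Lemma \ref{Lemma:HigherPowers} into the second sum, which contributes $O^{\star}(\Xi_{\K}(x))$. Summing these matches the claim. The main obstacle is essentially notational: justifying the interchange of summation (routine because all terms are nonnegative and only finitely many indices contribute for fixed $x$) and confirming that the $\PP^j \mid \mathfrak{a}$-to-$\mathfrak{a} = \PP^j \mathfrak{b}$ correspondence is bijective, which is exactly unique factorization of ideals. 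No new analytic input is needed beyond the two lemmas already in hand.
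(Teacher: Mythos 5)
Your argument is correct and is essentially the paper's proof: your derivation of $T_{\K}(x)=\sum_{N(\PP)\leq x}\log N(\PP)\sum_{j\geq 1}\I_{\K}(x/N(\PP^j))$ is exactly the ideal-theoretic Legendre--Chebyshev identity that the paper invokes directly, and the remaining steps (splitting off $j=1$ and applying Lemmas \ref{Lemma:FirstPower} and \ref{Lemma:HigherPowers}) coincide with the paper's. The only difference is that you prove the identity from unique factorization rather than citing it, which is a harmless (and arguably welcome) addition.
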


\begin{proof}
We require the ideal analogue of the Legendre--Chebyshev identity\footnote{Diamond--Halberstam \cite[p.~128]{DiamondHalberstam}
inform us that Landau calls this the ``Poincar\'e identity.''} \cite{Legendre},
\begin{equation*}
\prod_{N(\mathfrak{a})\leq x}N(\mathfrak{a}) = \prod_{N(\PP)\leq x}
\prod_{j\geq 1}N(\PP)^{\I_{\K} ( x / N(\PP^j) )}.
\end{equation*}
Theorem \ref{Theorem:Sunley} and Lemma \ref{Lemma:Multipart} with $j  = 1$ imply
\begin{align*}
&T_{\K}(x)
= \log\bigg(\prod_{N(\mathfrak{a})\leq x}N(\mathfrak{a})\bigg) 
= \log\bigg(\prod_{N(\PP)\leq x} \prod_{j\geq 1}N(\PP)^{\I_{\K} ( x / N(\PP^j) )} \bigg) \\
&\,\,=\sum_{N(\PP)\leq x} \log{N(\PP)} \sum_{j\geq 1} \I_{\K} \bigg(\frac{x}{N(\PP^j)}\bigg)\\
&\,\,=\sum_{N(\PP)\leq x} \I_{\K} \bigg(\frac{x}{N(\PP)}\bigg) \log{N(\PP)}  +\sum_{N(\PP)\leq x} \log{N(\PP)} \sum_{j\geq 2} \I_{\K} \left(\frac{x}{N(\PP^j)}\right)  \\
&\,\,= \underbrace{\kappa_{\K} x \sum_{N(\PP)\leq x} \frac{\log{N(\PP)}}{N(\PP)} + 0.55\, \Lambda_{\K}n_{\K}(n_{\K}+1) O^{\star} (x) }_{\text{Lemma \ref{Lemma:FirstPower}}}+
\underbrace{O^{\star}(\Xi_{\K}(x))}_{\text{Lemma \ref{Lemma:HigherPowers}}} . \qedhere
\end{align*}
\end{proof}

We are now in a position to complete the proof of \eqref{eq:A2}.
Equate the two expressions for $T_{\K}(x)$ from Lemmas \ref{Lemma:FirstBound} and \ref{Lemma:SecondBound} and deduce
\begin{align*}
& \kappa_{\K} x \log{x} + \left(  \frac{(n_{\K}+1)^2}{2(n_{\K}-1)}\Lambda_{\K}   +\kappa_{\K}\right) O^{\star}(x) \\
&\quad = \kappa_{\K} x \sum_{N(\PP)\leq x} \frac{\log{N(\PP)}}{N(\PP)}  + 0.55\, \Lambda_{\K}n_{\K}(n_{\K}+1) O^{\star} (x) + O^{\star}(\Xi_{\K}(x)).
\end{align*}
Divide by $\kappa_{\K} x$, simplify, and get
\begin{align*}
\sum_{N(\PP)\leq x} \frac{\log{N(\PP)}}{N(\PP)}
&= \log x +   \left(  \frac{(n_{\K}+1)^2}{2\kappa_{\K}(n_{\K}-1)}\Lambda_{\K}   + 1\right) O^{\star}(1) \\
&\qquad + \frac{0.55\, \Lambda_{\K}n_{\K}(n_{\K}+1)}{\kappa_{\K}} O^{\star} (1) + O^{\star}\bigg(\frac{\Xi_{\K}(x)}{\kappa_{\K}x}\bigg).
\end{align*}
From \eqref{eq:Xi} observe that
\begin{align}
\frac{\Xi_{\K}(x)}{\kappa_{\K}x} 
&=\frac{1}{\kappa_{\K}x} \cdot
\begin{cases}
\kappa_{\K} n_{\K} x + \Lambda_{\K} O^{\star}( 3.3\,x + 2x^{\frac{1}{3}}\log x + 50.8\,x^{\frac{1}{3}}  )  & \text{if $n_{\K} = 2$}, \\[5pt]
\kappa_{\K} n_{\K} x + \Lambda_{\K} O^{\star}( 3\,x^{ \frac{1}{2}}  \log x +  96\, x^{\frac{1}{2}}  )  & \text{if $n_{\K} = 3$},\\[5pt]
\kappa_{\K} n_{\K} x + \Lambda_{\K} O^{\star}( 40.31 \,n_{\K}x^{1 - \frac{2}{n_{\K}+1}}  )  & \text{if $n_{\K} \geq 4$}. 
\end{cases}
\nonumber\\
&=
\begin{cases}
2 + \cfrac{\Lambda_{\K}}{\kappa_{\K}} O^{\star}( 3.3 + 2x^{-\frac{2}{3}}\log x + 50.8\,x^{-\frac{2}{3}}  )  & \text{if $n_{\K} = 2$}, \\[8pt]
3 + \cfrac{\Lambda_{\K}}{\kappa_{\K}} O^{\star}( 3\,x^{ -\frac{1}{2}}  \log x +  96\, x^{-\frac{1}{2}}  )  & \text{if $n_{\K} = 3$},\\[8pt]
n_{\K}  + \dfrac{\Lambda_{\K}}{\kappa_{\K}} O^{\star}( 40.31 \,n_{\K}x^{- \frac{2}{n_{\K}+1}}  )  & \text{if $n_{\K} \geq 4$},
\end{cases}
\nonumber\\
&=
\begin{cases}
n_{\K} + \cfrac{\Lambda_{\K}}{\kappa_{\K}} O^{\star}(36.18  )  & \text{if $n_{\K} = 2$}, \\[8pt]
n_{\K} + \cfrac{\Lambda_{\K}}{\kappa_{\K}} O^{\star}( 69.36  )  & \text{if $n_{\K} = 3$},\\[8pt]
n_{\K}  + \dfrac{\Lambda_{\K}}{\kappa_{\K}} O^{\star}( 40.31 \,n_{\K}  )  & \text{if $n_{\K} \geq 4$},
\end{cases}
\label{eq:BetterBit}
\\
&=O^{\star}\bigg( n_{\K} +  40.31 \dfrac{\Lambda_{\K}n_{\K}}{\kappa_{\K}} \bigg). \nonumber
\end{align}
Put this all together and obtain
\begin{equation*}
\sum_{N(\PP)\leq x} \frac{\log{N(\PP)}}{N(\PP)}  = \log x +  A_{\K}(x),
\end{equation*}
in which
\begin{equation*}
|A_{\K}(x)| \leq
     \underbrace{  \left(  \frac{(n_{\K}+1)^2}{2\kappa_{\K}(n_{\K}-1)}\Lambda_{\K}   + 1\right)
+  \frac{0.55\, \Lambda_{\K}n_{\K}(n_{\K}+1)}{\kappa_{\K}} + n_{\K} + 
 40.31 \dfrac{\Lambda_{\K}n_{\K}}{\kappa_{\K}} }_{\Upsilon_{\K}}.
\end{equation*}
This yields the desired bound \eqref{eq:A2}. 
\qed

\subsection{Proofs of \eqref{eq:B1} and \eqref{eq:B2}}\label{Section:Proof:B}
For $x \geq 2$, partial summation yields
\begin{align*}
    \sum_{N(\PP)\leq x}&\frac{1}{N(\PP)}
    = \sum_{N(\PP)\leq x}\frac{\log{N(\PP)}}{N(\PP)}\frac{1}{\log{N(\PP)}}\\
    &= \frac{1}{\log{x}} \sum_{N(\PP)\leq x}\frac{\log{N(\PP)}}{N(\PP)} + \int_2^x \bigg(\sum_{N(\PP)\leq t}\frac{\log{N(\PP)}}{N(\PP)}\bigg)\frac{dt}{t(\log t)^2}\\
    &= \frac{1}{\log{x}} \big(\log{x} + A_{\K}(x)\big) + \int_2^x \big(\log{t} + A_{\K}(t)\big)\frac{dt}{t(\log t)^2}\\
    &= 1 + \frac{A_{\K}(x)}{\log{x}} + \int_2^x \frac{dt}{t\log t} + \int_2^x \frac{A_{\K}(t)}{t(\log t)^2}dt\\
    &= \log\log{x} - \log\log{2}  + 1 + \frac{A_{\K}(x)}{\log{x}} + \int_2^x \frac{A_{\K}(t)}{t(\log t)^2}dt\\
    &= \log\log{x} + \underbrace{1- \log\log{2}  + \int_2^\infty \frac{A_{\K}(t)}{t(\log t)^2}dt}_{M_{\K}} 
    +  \underbrace{ \frac{A_{\K}(x)}{\log{x}} - \int_x^\infty \frac{A_{\K}(t)}{t(\log t)^2}dt }_{B_{\K}(x)},
\end{align*}
in which \eqref{eq:A2} ensures that the integral that defines $M_{\K}$ converges and
\begin{equation*}
    |B_{\K}(x)|
    \leq \frac{|A_{\K}(x)|}{\log{x}} + \int_x^\infty \frac{|A_{\K}(t)|}{t(\log t)^2}dt
    \leq  \Upsilon_{\K} \bigg( \frac{1}{\log x} + \int_x^\infty \frac{dt}{t(\log t)^2}\bigg)
    = \frac{2\Upsilon_{\K}}{\log x}.
\end{equation*}
This proves \eqref{eq:B1} and \eqref{eq:B2}. \qed

\subsection{Proof of \eqref{eq:M1}}
Now we must find the constant $M_{\K}$; our approach is based on Ingham's \cite{Ingham}.  Define
\begin{align*}
    g(s) = \sum_{\PP}\frac{1}{N(\PP)^s} = \lim_{x\rightarrow \infty}\sum_{N(\PP)\leq x}\frac{1}{N(\PP)^s},
\end{align*}
which is analytic on $\Re s > 1$.
For $x \geq 2$, partial summation implies
\begin{align*}
    \sum_{N(\PP)\leq x}\frac{1}{N(\PP)^s}
    &= \sum_{N(\PP)\leq x}\frac{1}{N(\PP)} N(\PP)^{1-s}\\
    &= \frac{1}{x^{s - 1}}\sum_{N(\PP)\leq x}\frac{1}{N(\PP)} + (s-1) \int_{2}^x\bigg(\sum_{N(\PP)\leq t}\frac{1}{N(\PP)}\bigg)\frac{dt}{t^{s}}.
\end{align*}
Since $\Re(s-1) >0$ and
\begin{equation*}
\sum_{N(\PP)\leq x}\frac{1}{N(\PP)} = \log \log x + O(1)
\end{equation*}
by \cite[Lem.~2.4]{Rosen}, it follows that
\begin{equation*}
\lim_{x\to\infty}\frac{1}{x^{s - 1}}\sum_{N(\PP)\leq x}\frac{1}{N(\PP)} = 0.
\end{equation*}
Then for $\Re s > 1$,
\begin{align*}
g(s) 
&= (s-1) \int_{2}^{\infty}\bigg(\sum_{N(\PP)\leq t}\frac{1}{N(\PP)}\bigg)\frac{dt}{t^{s}} \\
&= (s-1)\int_2^{\infty} \big( \log \log t + M_{\K} + B_{\K}(t) \big) \frac{dt}{t^s} \\
&=  \underbrace{ (s-1)\int_{2}^\infty\frac{M_{\K}}{t^{s}}\,dt}_{I_1(s)} 
+ \underbrace{(s-1)\int_{2}^\infty\frac{B_{\K}(t)}{t^{s}} \,dt }_{I_2(s)}
+\underbrace{(s-1)\int_{2}^\infty\frac{\log\log t}{t^{s}}\,dt}_{I_3(s)} .
\end{align*}

\noindent\textit{First Integral.}
First observe that
\begin{equation*}
\lim_{s\to 1^+} I_1(s) = M_{\K} \lim_{s\to 1^+} \left((s-1)\int_2^{\infty} \frac{dt}{t^s}\right)
= M_{\K} \lim_{s\to 1^+} 2^{1-s} = M_{\K}.
\end{equation*}

\noindent\textit{Second Integral.}
From \eqref{eq:B2}, we have
\begin{equation*}
\frac{|B_{\K}(t)|}{t^s} = O\left(\frac{1}{t^s \log t}\right).
\end{equation*}
Let $u = \log t$, so that $du = dt/t$ and $e^u = t$, and conclude that
\begin{align}
(s-1) \int_2^{\infty} \frac{dt}{t^s \log t}
&=  (s-1) \int_{(s-1)\log 2}^{\infty} \frac{e^{-v}}{v }\,dv\nonumber \\
&= -(s-1) \Ei\big( (1-s)\log 2\big), \label{eq:EiEiO}
\end{align}
in which
\begin{equation*}
\Ei(x) = - \int_{-x}^{\infty} \frac{e^{-t}}{t}\,dt 
\end{equation*}
is the exponential integral function (the singularity is handled in the Cauchy principal value sense).
Since $\Ei(x) =  \log x + O(1)$ as $x \to 0^+$, \eqref{eq:EiEiO} ensures that
\begin{equation*}
\lim_{s\to 1^+} I_2(s) = 0.
\end{equation*}
Alternatively, one can avoid the exponential integral by using the identity
\begin{align*}
    \int_{z}^{\infty} \frac{e^{-v}}{v }\,dv 
    =  - \log{z} +\int_{z}^{1} \frac{e^{-v} - 1}{v}\,dv + \int_{1}^{\infty} \frac{e^{-v}}{v }\,dv 
    \qquad \text{for $z>0$}.
\end{align*}

\noindent\textit{Third Integral.}
Using the substitution $t^{s-1} = e^y$, we obtain
\begin{align*}
    I_3(s)
    &= \int_{\log(2^{s-1})}^{\infty} e^{-y} \log y \, dy - 2^{1-s} \log(s-1).
\end{align*}
Recalling that
\begin{equation*}
    \gamma  =  -\int_0^{\infty}e^{-t} \log t \,dt ,
\end{equation*}
we conclude that $I_3(s) = - \gamma- \log(s-1) + o(1)$ as $s \to 1^+$.

Putting this all together, 
$g(s) =  M_{\K} - \gamma -\log(s-1) + o(1)$
and hence
\begin{equation}\label{eq:LggaK}
M_{\K} = \gamma+\log(s-1) + g(s) +o(1) 
\end{equation}
as $s \to 1^+$.  The Euler product formula for $\zeta_{\K}(s)$ ensures that
\begin{align*}
&\log(s-1) + g(s) =\log(s-1) + \sum_{\PP}\frac{1}{N(\PP)^s}\\
&\qquad=\log(s-1) + \sum_{\PP}\left[ \frac{1}{N(\PP)^s} + \log\left(1 - \frac{1}{N(\PP)^s}\right) \right] - \sum_{\PP}  \log\left(1 - \frac{1}{N(\PP)^s}\right) \\
&\qquad= \log\big((s-1)\zeta_{\K}(s)\big) + \sum_{\PP}\left[ \frac{1}{N(\PP)^s} + \log\left(1 - \frac{1}{N(\PP)^s}\right) \right] ,
\end{align*}
in which the sum is uniformly convergent by comparison with $\sum_{\PP} N(\PP)^{-2}$.
Since $\zeta_{\K}(s)$ has a simple pole at $s=1$ with residue $\kappa_{\K}$,
we conclude from \eqref{eq:LggaK} that
\begin{equation}\label{eq:AlphaGamma}
M_{\K} = \gamma + \log \kappa_{\K} + \sum_{\PP}\left[\frac{1}{N(\PP)} + \log\left(1 - \frac{1}{N(\PP)}\right)\right].
\end{equation}
This concludes the proof of \eqref{eq:M1}.\qed

\subsection{Proofs of \eqref{eq:C1} and \eqref{eq:C2}}
From \eqref{eq:AlphaGamma} we deduce
\begin{equation}\label{eq:refmelastX}
    -\gamma - \log{\kappa_{\K}} + M_{\K} 
    = \sum_{N(\PP) \leq x}\left[\frac{1}{N(\PP)} + \log\left(1 - \frac{1}{N(\PP)}\right)\right] + F_{\K}(x),
\end{equation}
in which
\begin{equation}\label{eq:EK}
    F_{\K}(x) = \sum_{N(\PP) > x}\left[\frac{1}{N(\PP)} + \log\left(1 - \frac{1}{N(\PP)}\right)\right].
\end{equation}
For $y \in [0,1)$, observe that
\begin{equation}\label{eq:yyy}
0 \leq -y -\log(1-y) \leq  \frac{y^2}{1 - y}.
\end{equation}
Let $y = 1 / N(\PP)$ and deduce
\begin{align}
    |F_{\K}(x)|
    &=- \sum_{N(\PP) > x}\left[\frac{1}{N(\PP)} + \log\left(1 - \frac{1}{N(\PP)}\right)\right] \nonumber\\
    &\leq \sum_{N(\PP) > x}\frac{1}{N(\PP) (N(\PP) - 1)} \nonumber\\
    &\leq \sum_{p > x}\sum_{f_i}\frac{1}{p^{f_i} (p^{f_i} - 1)} \nonumber\\
    &\leq \sum_{p > x}\bigg(\sum_{f_i}1\bigg)\frac{1}{p (p - 1)}\nonumber\\
    &< n_{\K}\sum_{m > x}\frac{1}{m (m - 1)} \nonumber\\
    &= \frac{n_{\K}}{ \lceil x \rceil-1 } \label{eq:nKm} \\
    &\leq \frac{n_{\K}}{ x-1 },\nonumber
\end{align}
in which ${\sum_{f_i}}$ denotes the sum over the inertia degrees $f_i$ of the prime ideals lying over $p$
and $\lceil x \rceil$ denotes the least integer greater than or equal to $x$.
In light of \eqref{eq:B1}, the right-hand side of \eqref{eq:refmelastX} becomes
\begin{align}
    \sum_{N(\PP) \leq x}&\left[\frac{1}{N(\PP)} + \log\left(1 - \frac{1}{N(\PP)}\right)\right] + F_{\K}(x)\nonumber\\
    &= \sum_{N(\PP) \leq x} \frac{1}{N(\PP)} + \sum_{N(\PP) \leq x}\log\left(1 - \frac{1}{N(\PP)}\right) + F_{\K}(x)\nonumber\\
    &= \sum_{N(\PP) \leq x} \log\left(1 - \frac{1}{N(\PP)}\right) + \log\log{x} + M_{\K} + E_{\K}(x),\label{eq:importanteX}
\end{align}
in which $E_{\K}(x) = F_{\K}(x) + B_{\K}(x)$.  
Exponentiate \eqref{eq:refmelastX} and use \eqref{eq:importanteX} to obtain
\begin{align*}
\frac{e^{-\gamma}e^{M_{\K}}}{\kappa_{\K}}
= \exp\bigg[\sum_{N(\PP) \leq x} \log\left(1 - \frac{1}{N(\PP)}\right)\bigg](\log x)
e^{M_{\K}} e^{E_{\K}(x)},
\end{align*}
which yields
\begin{equation*}
    \prod_{N(\PP) \leq x} \left(1 - \frac{1}{N(\PP)}\right) = \frac{e^{- \gamma}}{\kappa_{\K} \log{x}} e^{-E_{\K}(x)}.
\end{equation*}
Write
\begin{equation*}
C_{\K}(x) = e^{-E_{\K}(x)} - 1
\end{equation*}
and use the inequality $|e^t - 1|  \leq |t| e^{|t|}$, valid for $t \in \R$, to deduce that
\begin{equation*}
\prod_{N(\PP) \leq x} \left(1 - \frac{1}{N(\PP)}\right) = \frac{e^{- \gamma}}{\kappa_{\K} \log{x}} \big(1 + C_{\K}(x) \big),
\end{equation*}
in which
\begin{equation*}
|C_{\K}(x)| \leq  |E_{\K}(x)| e^{ |E_{\K}(x)|}.
\end{equation*}
This concludes the proof of \eqref{eq:C1}.\qed

\subsection{Proof of \eqref{eq:M2}}
From \eqref{eq:AlphaGamma}, we have
\begin{equation*}
M_{\K} = \gamma + \log\kappa_{\K} + F_{\K}(2-\delta)\quad \text{for $\delta \in (0,1)$},
\end{equation*}
in which 
$F_{\K}(x)$ is defined by \eqref{eq:EK}.  In particular, \eqref{eq:yyy} and \eqref{eq:nKm} reveal that
\begin{equation*}
- n_{\K} \leq \liminf_{\delta\to0^+}F_{\K}(2-\delta) \leq \limsup_{\delta\to0^+}F_{\K}(2-\delta)\leq 0.
\end{equation*}
Thus, $-n_{\K} \leq M_{\K} - \gamma - \log \kappa_{\K} \leq 0$,
which is equivalent to \eqref{eq:M2}.\qed

\section{Explicit lower bounds for the Dedekind-zeta residue}\label{sec:ResidueBounds}
For a number field $\K$, recall that $\kappa_{\K}$ denotes the residue of the Dedekind zeta function
$\zeta_{\K}(s)$ at $s=1$.  If $\K = \Q$, then $\kappa_{\K} = 1$.  Consequently, we assume 
that $n_{\K} \geq 2$. Although $\kappa_{\K}$ can be computed directly from the analytic class 
number formula \eqref{eqn:residue_class_ana_form}, it is worth investigating bounds 
that are given only in terms of the absolute value of the discriminant $\Delta_{\K}$
and the degree $n_{\K}$ of $\K$.

Since $n_{\K} = r_1 + 2r_2\geq 2$, it follows that $2^{r_1}(2\pi)^{r_2} \geq 2^{2}(2\pi)^{0} = 4$.
Friedman \cite[Thm.~B]{Friedman89} established that $R_{\K}/w_{\K} \geq 0.09058$,
a sharper version of a bound from Zimmert \cite{Zimmert} (see also \cite[Thm.~7, p.~273]{Lang}).
Consequently,
\begin{equation}\label{eqn:kapp_bound_Zimmert}
    \kappa_{\K} \geq \frac{2^{r_1}(2\pi)^{r_2}R_{\K}}{w_{\K}\sqrt{|\Delta_{\K}|}} > \frac{4\cdot 0.09058}{\sqrt{|\Delta_{\K}|}} = \frac{0.36232}{\sqrt{|\Delta_{\K}|}}.
\end{equation}

Another approach is based on Stark's estimate
\begin{equation}\label{eqn:kappd_bound_Stark}
    \kappa_{\K} > \frac{c}{n_{\K} g(n_{\K}){|\Delta_{\K}|}^{1/n_{\K}}},
\end{equation}
in which
\begin{equation*}
    g(n_{\K}) = \begin{cases}
    1 &\text{if }\K\text{ has a normal tower over }\Q,\\
    n_{\K} ! &\text{otherwise,}
    \end{cases}
\end{equation*}
and $c$ is effectively computable \cite{StarkBS}. The denominator in \eqref{eqn:kappd_bound_Stark} can be replaced by $g(n_{\K})\log{|\Delta_{\K}|}$ if $\K$ has no quadratic subfield. 
We show that $c = 0.001448029$ is likely unconditionally admissible (see Remark \ref{Remark:Pintz}) in \eqref{eqn:kappd_bound_Stark}, with improvements possible in some cases. If $n_{\K} > 2$, then \eqref{eqn:kappd_bound_Stark} is generally preferred over \eqref{eqn:kapp_bound_Zimmert}.

In what follows, we adhere to Stark's notation so that the reader may, if they wish,
confirm our calculations.
Stark proves the existence of effectively computable constants $c_4$ and $c_8$ such that
\begin{equation}
    \kappa_{\K} > {c_4}^{-1}\min\left\{\frac{1}{\alpha(n_{\K})\log{|\Delta_{\K}|}}, \frac{1}{c_8{|\Delta_{\K}|}^{1/n_{\K}}}\right\},
\end{equation}
in which
\begin{equation*}
    \alpha(n_{\K})=
    \begin{cases}
    4&\text{if }\K\text{ is normal over }\Q,\\
    16&\text{if }\K\text{ has a normal tower over }\Q,\\
    4n_{\K}!&\text{otherwise.}
    \end{cases}
\end{equation*}
Moreover, $\kappa_{\K} > 1/(c_4\alpha(n_{\K})\log{|\Delta_{\K}|})$ if $\K$ does not have a quadratic subfield. Therefore, in the case $\K$ does not have a quadratic subfield, 
\begin{align*}
    \kappa_{\K}
    &> \frac{1}{c_4c_8 {|\Delta_{\K}|}^{1/n_{\K}}} \min\left\{\frac{c_8{|\Delta_{\K}|}^{1/n_{\K}}}{\alpha(n_{\K})\log{|\Delta_{\K}|}}, 1\right\}\\
    &\geq \frac{1}{c_4c_8 {|\Delta_{\K}|}^{1/n_{\K}}} \min\left\{\frac{e c_8}{n_{\K}\alpha(n_{\K})}, 1\right\},
\end{align*}
since $e^x \geq x e$ for $x>0$ implies that 
$${|\Delta_{\K}|}^{1/n_{\K}} = \exp((1/n_{\K})\log{|\Delta_{\K}|}) \geq (e/n_{\K})\log{|\Delta_{\K}|}.$$ 

We find admissible constants $c_4$ and $c_8$ by carefully studying \cite{StarkBS}. First,
\begin{equation*}
    c_4 = 2c_3 = 2e^{\frac{21}{8} + \frac{c_1}{2} - \frac{c_2}{8}\frac{\Gamma'}{\Gamma}\left(\frac{1}{2}\right)},
\end{equation*}
in which $c_1 = 0$ and $c_2 = 2/\log{3}$ are admissible options \cite[Lemma 4]{StarkBS}. Hence, $c_4 \approx 43.162115 < 43.2$ is admissible. 
Stark claims somewhat mysteriously that ``it is likely from a remark in Bateman and Grosswald \cite[p.~188]{batemanGrosswald} that $c_8 = \pi/6$ will suffice"; see Remark \ref{Remark:Pintz}.
If one proceeds with $c_8 = \pi/6$, then 
\begin{equation*}
    \kappa_{\K}
    > \frac{1}{c_4c_8 {|\Delta_{\K}|}^{1/n_{\K}}} \min\left\{\frac{ec_8}{n_{\K}\alpha(n_{\K})}, 1\right\}
    = \frac{e/c_4}{n_{\K}\alpha(n_{\K}){|\Delta_{\K}|}^{1/n_{\K}}}
    > \frac{0.06297842}{n_{\K}\alpha(n_{\K}){|\Delta_{\K}|}^{1/n_{\K}}},
\end{equation*}
since $(ec_8)/(n_{\K}\alpha(n_{\K})) \leq (ec_8)/8 < 0.178 < 1$. It follows that
\begin{equation*}
    \kappa_{\K} >
    \begin{cases}
    \frac{0.015744605}{n_{\K}g(n_{\K}){|\Delta_{\K}|}^{1/n_{\K}}}&\text{if $\K$ is normal over $\Q$},\\[5pt]
    \frac{0.003936151}{n_{\K}g(n_{\K}){|\Delta_{\K}|}^{1/n_{\K}}}&\text{if $\K$ has a normal tower over $\Q$},\\[5pt]
    \frac{0.015744605}{n_{\K}g(n_{\K}){|\Delta_{\K}|}^{1/n_{\K}}}&\text{otherwise}.
    \end{cases}
\end{equation*}
Moreover, if $\K$ does not have a quadratic subfield, then
\begin{equation*}
    \kappa_{\K}
    > \frac{1/c_4}{\alpha(n_{\K})\log{|\Delta_{\K}|}}
    > \begin{cases}
    \frac{0.005792116}{g(n_{\K})\log{|\Delta_{\K}|}}&\text{if $\K$ is normal over $\Q$},\\[5pt]
    \frac{0.001448029}{g(n_{\K})\log{|\Delta_{\K}|}}&\text{if $\K$ has a normal tower over $\Q$},\\[5pt]
    \frac{0.005792116}{g(n_{\K})\log{|\Delta_{\K}|}}&\text{otherwise}.
    \end{cases}
\end{equation*}
Assuming that $c_8 = \pi/6$ is feasible, it follows that $c = 0.001448029$ is unconditionally admissible in \eqref{eqn:kappd_bound_Stark}, with improvements available if more information is known about $\K$. This justifies the claims made in Remark \ref{Remark:Kappa}.

\begin{remark}\label{Remark:Pintz}
Stark suggests that $c_8 = \pi/6$ is admissible in \cite[Lem.~11]{StarkBS} and Pintz \cite[Thm.~3]{Pintz76} proved that $c_8 = \pi/12+  o(1)$ works.  
This suggests that Stark's $c_8 = \pi / 6$ is acceptable.
Further evidence was provided by Schinzel, a referee of Pintz's paper, who improved the value of $c_8$ in a footnote to Pintz' paper \cite[p.~277]{Pintz76}.
That is, for each $\varepsilon > 0$ and $|\Delta_{\K}|$
sufficiently large, he proved that $c_8 = (16/\pi - \varepsilon)^{-1}$ is admissible \cite[p.~277]{Pintz76}.
Moreover, one can always just use the lower bound 
\eqref{eqn:kapp_bound_Zimmert}.
\end{remark}

\section{Remarks and Open Problems}\label{Section:Future}

\begin{remark}\label{Remark:Theta}
The estimate $\theta(x) < 1.01624x$ from \eqref{eq:Rosser} has been improved over the years.
The current record appears to be due to Broadbent, Kadiri, Lumley, Ng, and Wilk \cite{Broadbent}, who proved that
\begin{equation*}
\theta(x) < (1+1.93378 \times 10^{-8})x, \quad \text{for $x\geq 0$}.
\end{equation*}
This bound results in a slight improvement to the constants in
Theorem \ref{Theorem:Main}.
\end{remark}

\begin{question}
Tenenbaum \cite{Tenenbaum} recently proved a generalization of Mertens' second theorem.
Following similar arguments \textit{mutatis mutandis} it may be possible to write
\begin{equation*}
    \mathcal{S}_{\K}(k,x) = \sum_{N(\PP_1\PP_2\cdots\PP_k)\leq x}\frac{1}{N(\PP_1\PP_2\cdots\PP_k)} = P_{\K}(k,\log\log{x}) + O\left(\frac{(\log\log{x})^{k-1}}{\log{x}}\right),
\end{equation*}
for $x\geq 3$, in which $P_{\K}(k,X) = \sum_{0\leq j\leq k} \lambda_{j,k} X^j$ and $\lambda_{j,k}$ are defined as in \cite[Thm.~1]{Tenenbaum}.
Can one make the implied constant explicit in terms of $n_{\K}$ and $|\Delta_{\K}|$?
\end{question}

\begin{question}
In the case $\K = \Q$, Mertens' third theorem asserts
\begin{equation*}
    \prod_{p\leq x} \left(1 - \frac{1}{p}\right)^{-1} \,\sim\,\,\,e^\gamma \log{x} .
\end{equation*}
Rosser--Schoenfeld \cite{Rosser} observed that the product is less than $e^\gamma \log{x}$ for $x\leq 108$ and
they wondered if the two expressions took turns exceeding the other.
Diamond--Pintz proved that this is the case \cite{DiamondPintz}. In fact, the difference 
is $\Omega(\log\log\log{x}/\sqrt{x})$ infinitely often.
Does a similar phenomenon occur for $\K \neq \Q$?
\end{question}

\bibliographystyle{amsplain}
\bibliography{UEMTNF}

\end{document}